\newtheorem{theorem}{Theorem}
\newtheorem{lemma}{Lemma}
\newenvironment{proof}{\noindent\textsc{{Proof}}.}{\hfill\raisebox{-1ex}{$\boxtimes$}}
\renewcommand{\Bbb}[1]{\mathbb{#1}}
\newcommand{\N}{{\Bbb N}}         
\newcommand{\R}{{\Bbb R}}         
\newcommand{\I}{{\Bbb I}}         %
\newcommand{\Rp}{{\Bbb R}^{+}}    
\newcommand{\Z}{{\Bbb Z}}         
\newcommand{\cC}{{\cal C}}
\newcommand{\cF}{{\cal F}}
\newcommand{\cH}{{\cal H}}
\newcommand{\cM}{{\cal M}}
\newcommand{\cS}{{\cal S}}
\newcommand{\cSM}{{\cal S}^\times}
\newcommand{\ve}{\varepsilon}
\def\K{{\rm {\bf K}_c^{\bm\theta}}}
\def\hK{{\rm {\bf K}_c}}
\newcommand{\diam}{\operatorname{diam}}
\newcommand{\vv}[1]{{\mathbf{#1}}}
\renewcommand{\le}{\leqslant}
\renewcommand{\ge}{\geqslant}
\newcommand{\bad}{\mathbf{Bad}}
\begin{document}

\title{A note on three problems in metric \\ Diophantine approximation}

\author{Victor Beresnevich\footnote{Research supported by EPSRC grant EP/J018260/1}
\\ {\small\sc (York) } \and  Sanju Velani\footnote{Research supported by EPSRC grants EP/E061613/1, EP/F027028/1 and EP/J018260/1 } \\ {\small\sc (York)}}

\date{\small\today}

\maketitle

\centerline{{\it Dedicated to  Shrikrishna Gopalrao Dani}}
\centerline{{\it on the occasion of his 65th birthday}}

\begin{abstract}
{\footnotesize
The use of Hausdorff measures and dimension in the theory of Diophantine approximation dates back to the 1920s with the theorems of Jarn\'ik and Besicovitch regarding well-approximable and badly-approximable points. In this paper we consider three inhomogeneous problems that further develop these classical results.
Firstly, we obtain a  Jarn\'ik type theorem for the set $ \cS^\times_2(\psi;\bm\theta)$ of multiplicatively approximable points in the plane  $\R^2$.  This Hausdorff measure statement does not reduce to Gallagher's Lebesgue measure statement as one might expect and is new even in the homogeneous setting ($\bm\theta = \bm0$). Next, we establish a  Jarn\'ik type theorem for the set $ \cS^\times_2(\psi;\bm\theta)\cap \cC $ where $\cC$ is a non-degenerate planar curve.  This completes the Hausdorff theory for planar curves  and clarifies a potential oversight in \cite{BL}.  Finally, we show that the set $ \bad(i,j;\bm\theta) $ of simultaneously inhomogeneously badly approximable points in $\R^2$ is of full dimension.  The underlying philosophy behind the proof has other applications; e.g. towards establishing the inhomogeneous version of Schmidt's Conjecture. The higher dimensional analogues of the planar results are also discussed.}
\end{abstract}

\vspace{2cm}

\noindent{\small 2000 {\it Mathematics Subject Classification}\/:
Primary 11J83; Secondary 11J13, 11K60}\bigskip

\noindent{\small{\it Keywords and phrases}\/: Metric Diophantine approximation, Jarn\'ik type theorems, Hausdorff dimension and measure, multiplicative and inhomogeneous  simultaneous approximation.}

\section{Multiplicatively $\psi$-well approximable points}

Throughout $\psi:\N\to[0,+\infty)$ is a non-negative function. We will normally assume that $\psi$ is strictly positive and monotonically decreasing in which case $\psi$ will be referred to as an {\em approximating function}. Given $ \psi$, a real number $x$ will be called {\em $\psi$-well approximable} or simply {\em $\psi$-approximable} if there are infinitely many $q\in\N$ such that
\begin{equation*}
 \|qx\| \ <  \ \psi(q) \, .
\end{equation*}
Here and throughout $ \| \cdot  \| $ denotes the distance of a real
number to the nearest integer. Let $\cS_1(\psi)$ denote the set of all $\psi$-approximable real numbers.
The set $\cS_1(\psi)$ is invariant under translations by integers. Hence,  we will often restrict $x$ to lie in the unit interval $\I:=[0,1]$.

The well known theorem of Dirichlet states that $\cS_1(\psi)=\R$ when $\psi(q)=q^{-1}$. In turn, a rather simple consequence of the Borel-Cantelli lemma from probability theory is that $\cS_1(\psi)$ is null (that is of Lebesgue measure zero) whenever $\sum_{q=1}^\infty\psi(q)<\infty$. However, Khintchine's theorem \cite{Khintchine-1924} tells us that the set $\cS_1(\psi)$ is full (that is its complement is of Lebesgue measure zero) whenever $\sum_{q=1}^\infty\psi(q)=\infty$ and $\psi$ is monotonic. In order to quantify the size of $\cS_1(\psi)$ when it is null, Jarn\'ik \cite{Jarpa} and Besicovitch \cite{Besic34} pioneered the use of Hausdorff measures and dimension. Throughout, $\dim X$ will denote the Hausdorff dimension of a  subset $X$ of  $\R^n$ and $\cH^s(X)$ the $s$-dimensional Hausdorff measure (see \S\ref{HMD} for the definition and further details).   The modern version of the classical Jarn\'ik-Besicovitch theorem (see \cite{BBDVRoth} or \cite{BDV06}) states that for any approximating function $\psi$
\begin{equation}\label{JB}
\dim \cS_1(\psi) = \min\left\{1,\frac{2}{\tau+1}\right\}   \qquad\text{where }  \quad \tau:=\liminf_{q\to\infty}\frac{-\log\psi(q)}{\log q}\,.
\end{equation}
In other words, the `modern theorem'  relates the Hausdorff dimension of  $\cS_1(\psi)$ to  the lower order  at infinity of $1/\psi$ and up to a certain degree allows us to  discriminate between $\psi$-approximable sets of Lebesgue measure zero.
A more delicate measurement of the `size' of  $\cS_1(\psi)$ is obtained by expressing the size in terms of Hausdorff measures $\cH^s$.   With respect to such measures, the modern version of Jarn\'ik theorem  (see \cite{BBDVRoth} or \cite{BDV06})  states that  for any $s \in (0,1)$ and any  approximating function  $\psi$
\begin{equation}\label{jarnik}
\cH^{s}\big(\cS_1(\psi)\cap\I\big) \, = \,
\left\{\begin{array}{cl}
0  & {\rm if} \;\;\;
\textstyle{\sum_{q=1}^\infty} \; q^{1-s}\psi^{s}(q) \; <\infty \; ,\\[2ex]
\cH^s(\I) & {\rm if} \;\;\;
\textstyle{ \sum_{q=1}^\infty } \;  q^{1-s}\psi^{s}(q) \;
 =\infty \; .
\end{array}\right.
\end{equation}
Note that for $0<s<1$ we have that $\cH^s(\I)=\infty$. However,  since $\cH^1(\I)=1$,  the statement as written also holds for $s=1$ due to the aforementioned theorem of Khintchine.   Note that it is trivially true for $s>1$. The upshot is that statement \eqref{jarnik} is true for any $s >0 $  and is referred to as the {\em Khintchine-Jarn\'ik theorem}. It is worth pointing out that there is an even more general version of \eqref{jarnik} that makes use of more general Hausdorff measures, see \cite{BBDVRoth, BDV06, BV06}. Within this paper we restrict ourselves to the case of $s$-dimensional Hausdorff measures.

In higher dimensions there are various natural generalizations of $\cS_1(\psi)$. Given an approximating function $\psi$, the point $\vv x =(x_1,\dots,x_n) \in \R^n$ will be called  {\em $\psi$-well approximable} or simply {\em $\psi$-approximable} if there are infinitely many $q\in\N$ such that
\begin{equation}\label{vb1}
\max\{\|qx_1\|,\dots,\|qx_n\|\}<\psi(q)
\end{equation}
and it will be called {\em multiplicatively $\psi$-well approximable} or simply {\em multiplicatively $\psi$-approximable} if there are infinitely many $q\in\N$ such that
\begin{equation}\label{vb2}
\|qx_1\|\cdots\|qx_n\|<\psi(q)\,.
\end{equation}
Denote by  $\cS_n(\psi)$ the set of $\psi$-approximable points in $\R^n$ and by $\cSM_n(\psi)$ the set of multiplicatively $\psi$-approximable points in $\R^n$. On comparing \eqref{vb1} and \eqref{vb2} one easily spots that $$\cS_n(\psi^{1/n})\subset\cSM_n(\psi)  \, . $$

For the sake of clarity, in what follows we will mainly restrict our attention to the case of the plane $\R^2$. The Khintchine-Jarn\'ik theorem for $\cS_2(\psi)$ (see \cite{BBDVRoth} or \cite{BDV06}) states that for any $s>0$ and any approximating function $\psi$
\begin{equation}\label{KJ}
\cH^{s}\big(\cS_2(\psi)\cap\I^2\big) \, = \,
\left\{\begin{array}{cl}
0  & {\rm if} \;\;\;
\textstyle{\sum_{q=1}^\infty} \; q^{2-s}\psi^{s}(q) \; <\infty \; ,\\
&
\\
\cH^s(\I^2) & {\rm if} \;\;\;
\textstyle{ \sum_{q=1}^\infty } \;  q^{2-s}\psi^{s}(q) \;
 =\infty \; .
\end{array}\right.
\end{equation}
Regarding the Lebesgue case,  which corresponds to when $s=2$, Gallagher \cite{gall} showed that the monotonicity of $\psi$ is unnecessary. As a consequence of the Mass Transference Principle \cite{BV06} we have that \eqref{KJ} holds for any $\psi$ (not necessarily monotonic) and any $s>0$.

In the multiplicative setup,  Gallagher \cite{gallagher1962} essentially proved that for any approximating function~$\psi$
\begin{equation}\label{Gal}
\cH^2\big(\cSM_2(\psi)\cap\I^2\big) \, = \,
\left\{\begin{array}{cl}
0  & {\rm if} \;\;\;
\textstyle{\sum_{q=1}^\infty} \; \psi(q)\log q \; <\infty \; ,\\
&
\\
\cH^2(\I^2) & {\rm if} \;\;\;
\textstyle{ \sum_{q=1}^\infty } \;  \psi(q)\log q \;
 =\infty \; .
\end{array}\right.
\end{equation}
The extra log factor in the above sum accounts for the larger volume of the fundamental domains defined by \eqref{vb2} compared to \eqref{vb1}. The recent work \cite{BHV} has made an attempt to relax the monotonicity assumption on $\psi$ within the multiplicative setting. Our goal in this paper  is to investigate the Hausdorff measure theory within the multiplicative setting.

\medskip

\textbf{Problem 1:} \textit{Determine the Hausdorff measure $\cH^s $ of $\cSM_n(\psi)$.}

\medskip

\noindent This problem is somewhat  different to the non-multiplicative setting where we have the uniform solution  given by \eqref{KJ}. First of all,  we note that
\begin{equation}\label{vb4}
\text{if $ \ s\le 1  \ $ then $  \ \cH^s(\cSM_2(\psi)\cap\I^2)=\infty  \ $ irrespective of approximating function $ \ \psi $.}
\end{equation}
To see this,  we observe that for any $\psi$-approximable number $\alpha\in\R$ the whole line $x_1=\alpha$ is contained in $\cSM_2(\psi)$. Hence,
\begin{equation}\label{vb3}
\cS_1(\psi)\times\R\subset \cSM_2(\psi)\,.
\end{equation}
It is easy to verify   (for example, by using the theory of continued fractions) that $\cS_1(\psi)$ is an infinite set for any approximating function $\psi$ and so \eqref{vb3} implies \eqref{vb4}. Next, since  $\cSM_2(\psi)  \subseteq \R^2$, we trivially have that
 \begin{equation*}
\text{if  $ \ s > 2   \ $ then $  \ \cH^s(\cSM_2(\psi)\cap\I^2)= 0   \ $ irrespective of $ \ \psi $.}
\end{equation*}
 The upshot of this and \eqref{vb4} is that when attacking  Problem 1, there is no loss of generality in assuming that $ s \in (1,2]$.   Furthermore,  the Lebesgue case ($s=2$) is covered by Gallagher's result so we may as well assume that $1 < s < 2$.

Recall that Gallagher's multiplicative statement  \eqref{Gal} has the extra  `log factor' in the `volume' sum compared to the simultaneous statement  \eqref{KJ}.  It is therefore natural to expect the log factor to appear in one form or another when determining the  genuine  `fractal' Hausdorff measure $\cH^s$ of $\cSM_2(\psi)$; that is to say with $ s \in (1, 2)$.  This, as we shall soon see,  is very far from the truth. The `log factor' completely disappears!  Thus,  genuine `fractal'  Hausdorff measures  are  insensitive to the multiplicative nature of $\cSM_2(\psi)$. Indeed, what  we essentially have is  that
$$
\cH^{s}\big(\cS^\times_2(\psi)\big) \, =
\cH^{s-1}\big(\cS_1(\psi)\big) \,
$$
which reflects  the fact that for  genuine  `fractal'  Hausdorff measures the inclusion \eqref{vb3} is sharp. In short, for any $s \in (0,1)$, the points of $\cSM_2(\psi)$ that do not lie in $\R\times\cS_1(\psi)$ do not contribute any substantial `mass' in terms of the associated  $s$-dimensional Hausdorff measure.

The ideas and tricks used in our investigation of Problem 1 are equally valid within  the more general inhomogeneous setup: given an approximating function $\psi$ and a fixed point $\bm\theta=(\theta_1,\dots,\theta_n)\in\R^n$, let
$\cSM_n(\psi;\bm\theta)$ denote  the set of points $(x_1,\dots,x_n) \in \R^n$ such that there are infinitely many $q\in\N$ satisfying the inequality
\begin{equation}\label{vb2++}
\|qx_1-\theta_1\|\cdots\|qx_n-\theta_n\|<\psi(q)\,.
\end{equation}
We prove the following inhomogeneous statement.

\begin{theorem}\label{t1}
Let $\psi$ be an approximating function, $\bm\theta=(\theta_1,\theta_2)\in\R^2$ and $s \in (1,2)$. Then
\begin{equation}\label{e:006+}
\cH^{s}\big(\cS^\times_2(\psi;\bm\theta) \cap\I^2  \big) \, = \,
\left\{\begin{array}{ll}
0  & {\rm if} \;\;\;
\textstyle{\sum_{q=1}^\infty} \; q^{2-s}\psi^{s-1}(q) \; <\infty \; ,\\[1ex]
&
\\
\cH^{s}(\I^2) & {\rm if} \;\;\;
\textstyle{ \sum_{q=1}^\infty } \;  q^{2-s}\psi^{s-1}(q) \;
 =\infty \; .
\end{array}\right.
\end{equation}
\end{theorem}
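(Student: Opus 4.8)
The plan is to prove the two halves of \eqref{e:006+} by quite different arguments. The convergence (null) half is an upper bound, obtained by covering directly the hyperbola-type neighbourhoods out of which $\cS^\times_2(\psi;\bm\theta)$ is built. The divergence (full) half is a lower bound, obtained by reducing to the one-dimensional inhomogeneous Jarn\'ik theorem via the elementary fibre inclusion
\begin{equation*}
\cS_1(\psi;\theta_1)\times\R\ \subseteq\ \cS^\times_2(\psi;\bm\theta)\,,
\end{equation*}
where $\cS_1(\psi;\theta_1)$ is the set of $x\in\R$ with $\|qx-\theta_1\|<\psi(q)$ for infinitely many $q\in\N$. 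This inclusion is immediate, since $\|qx_2-\theta_2\|\le\tfrac12$ forces $\|qx_1-\theta_1\|\,\|qx_2-\theta_2\|<\psi(q)$ whenever $\|qx_1-\theta_1\|<\psi(q)$.

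For the convergence case, write $\cS^\times_2(\psi;\bm\theta)\cap\I^2=\limsup_{q\to\infty}E_q$, where $E_q=\set{(x_1,x_2)\in\I^2:\ \|qx_1-\theta_1\|\,\|qx_2-\theta_2\|<\psi(q)}$. For each fixed $q$, the set $E_q$ is contained in a union of $\asymp q^2$ translates (one centred at each point $\big((p_1+\theta_1)/q,(p_2+\theta_2)/q\big)$ lying in $\I^2$) of the ``bow-tie'' region
\begin{equation*}
B_q:=\set{(u,v)\in\R^2:\ |u|\,|v|<\psi(q)q^{-2},\ |u|\le\tfrac{1}{2q},\ |v|\le\tfrac{1}{2q}}\,.
\end{equation*}
I would estimate the Hausdorff content $\cH^s_\infty(B_q)$ by splitting $B_q$ into dyadic slabs $|u|\asymp 2^{-j}$ (with $j\ge\log_2(2q)$), covering each slab by balls whose common radius matches the shorter of its two side-lengths, and summing the resulting quantities $\asymp\psi^{s-1}(q)\,q^{-2(s-1)}\,2^{(s-2)j}$. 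Because $s<2$ strictly, this geometric series in $j$ is comparable to its leading term, whence $\cH^s_\infty(B_q)\ll q^{-s}\psi^{s-1}(q)$ and therefore $\cH^s_\infty(E_q)\ll q^{2}\cdot q^{-s}\psi^{s-1}(q)=q^{2-s}\psi^{s-1}(q)$. Since $\sum_q\cH^s_\infty(E_q)<\infty$ by hypothesis, the standard inequality $\cH^s_\infty(\limsup E_q)\le\inf_{N}\sum_{q\ge N}\cH^s_\infty(E_q)$, together with the equivalence $\cH^s_\infty(X)=0\Leftrightarrow\cH^s(X)=0$, gives $\cH^s\big(\cS^\times_2(\psi;\bm\theta)\cap\I^2\big)=0$. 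It is precisely the hypothesis $s<2$ that makes the $j$-series converge; at $s=2$ it contributes a factor $\asymp\log q$, which is the source of the $\log q$ in Gallagher's statement \eqref{Gal}. In this sense, genuinely fractal $\cH^s$ is blind to the multiplicative (hyperbolic) geometry, and effectively only the slabs $|u|\asymp q^{-1}$ or $|v|\asymp q^{-1}$ contribute.

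For the divergence case, rewrite the hypothesis as $\sum_q q^{\,1-(s-1)}\psi^{s-1}(q)=\infty$, noting $s-1\in(0,1)$. By the inhomogeneous analogue of the one-dimensional Jarn\'ik theorem \eqref{jarnik}, whose divergence part is available since $\psi$ is monotonic (for instance via inhomogeneous ubiquity, or via the Mass Transference Principle applied to the inhomogeneous Khintchine theorem), we get $\cH^{s-1}\big(\cS_1(\psi;\theta_1)\cap\I\big)=\cH^{s-1}(\I)=\infty$. Now combine the inclusion $\big(\cS_1(\psi;\theta_1)\cap\I\big)\times\I\subseteq\cS^\times_2(\psi;\bm\theta)\cap\I^2$ with the slicing inequality for Hausdorff measures, applied to $A:=\big(\cS_1(\psi;\theta_1)\cap\I\big)\times\I$: each horizontal slice $A\cap\set{x_2=c}$ with $c\in\I$ is an isometric copy of $\cS_1(\psi;\theta_1)\cap\I$, so
\begin{equation*}
\infty\ =\ \int_{\I}\cH^{s-1}\big(A\cap\set{x_2=c}\big)\,dc\ \ll\ \cH^{s}(A)\ \le\ \cH^{s}\big(\cS^\times_2(\psi;\bm\theta)\cap\I^2\big)\,,
\end{equation*}
which, since $\cH^s(\I^2)=\infty$ for $s<2$, is exactly the divergence conclusion. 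It is worth stressing that using the slicing inequality, rather than a crude product estimate, is what upgrades positivity of the measure to infinitude.

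The step I expect to be the main obstacle is the content bound $\cH^s_\infty(B_q)\ll q^{-s}\psi^{s-1}(q)$: one must choose the covering of the bow-tie so as to waste nothing, and verify that the sum over dyadic slabs telescopes cleanly and uniformly in $q$ throughout the range $s\in(1,2)$, right down to the cut-off scale $q^{-1}$. Granted that estimate, the remaining ingredients — the fibre inclusion, the covering step, the passage through the one-dimensional inhomogeneous Jarn\'ik theorem, and the slicing inequality — are routine, and the argument makes transparent the heuristic $\cH^s\big(\cS^\times_2(\psi;\bm\theta)\big)=\cH^{s-1}\big(\cS_1(\psi;\theta_1)\big)$ advertised in the introduction, with the fibre inclusion being sharp for these measures.
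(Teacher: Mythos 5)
Your proposal is correct and is essentially the paper's own argument: the divergence half (the fibre inclusion, the one-dimensional inhomogeneous Jarn\'ik theorem and the slicing lemma) is identical, while your convergence half is the paper's covering argument repackaged per $q$ --- your dyadic slabs in $|u|$ and balls of radius the shorter side-length play exactly the role of the paper's parameter $m$ and of splitting the rectangles $S_{\bm\theta}(q,m,p_1,p_2)$ into squares, with the same geometric series in the aspect ratio converging precisely because $s<2$. The one detail to tidy is that your slab contribution $\asymp\psi^{s-1}(q)\,q^{-2(s-1)}\,2^{(s-2)j}$ is only valid while the $v$-extent $\psi(q)q^{-2}2^{j}$ of the slab is its shorter side; the central slabs of the bow-tie (where the $v$-extent is capped at $(2q)^{-1}$) contribute differently but still sum to $\ll q^{-s}\psi^{s-1}(q)$ (for instance, reduce to the region $|v|\le|u|$ by symmetry), so the claimed content bound, and hence the theorem, stands.
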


\medskip

\noindent {\em Remark 1.1.}  Note that $\cH^{s}(\I^2)=\infty$ when $s<2$. We  reiterate the fact that unlike the Khintchine-Jarn\'ik theorem,  the statement of Theorem  \ref{t1} is false when $s=2$.

\medskip

\noindent {\em Remark 1.2.}  In higher dimensions, Gallagher's multiplicative statement reads
\begin{equation*}\label{Gal}
\cH^n\big(\cSM_n(\psi)\cap\I^2\big) \, = \,
\left\{\begin{array}{cl}
0  & {\rm if} \;\;\;
\textstyle{\sum_{q=1}^\infty} \; \psi(q)\log^{n-1}q \; <\infty \; ,\\
&
\\
\cH^2(\I^2) & {\rm if} \;\;\;
\textstyle{ \sum_{q=1}^\infty } \;  \psi(q)\log^{n-1}q \;
 =\infty \; .
\end{array}\right.
\end{equation*}
For $n > 2$, the  proof of Theorem~\ref{t1} can be adapted to show that for any $s\in(n-1,n)$
$$
\cH^{s}\big(\cS^\times_n(\psi;\bm\theta) \cap\I^n  \big) \, = \,
0   \qquad\text{if} \qquad
{\sum_{q=1}^\infty} \; q^{n-s}\psi^{s+1-n}(q)\log^{n-2}q \; <\infty \; .
$$
Thus, for convergence in higher dimensions we loose a log factor from the Lebesgue volume sum appearing in Gallagher's result.  This of course is absolutely consistent with the $n=2$ situation given by Theorem~\ref{t1}.
Regarding a  divergent statement,  the arguments used in proving  Theorem~\ref{t1} can be adapted to show that for any $s\in(n-1,n)$
$$
\cH^{s}\big(\cS^\times_n(\psi;\bm\theta) \cap\I^n  \big) \, = \,
\cH^{s}(\I^n)   \qquad\text{if} \qquad
{\sum_{q=1}^\infty} \; q^{n-s}\psi^{s+1-n}(q) \; =\infty \; .
$$
Thus,  there is a discrepancy in the above `$s$-volume' sum  conditions for convergence and divergence when $n > 2$. In view of this,  it remains an interesting open problem to determine the necessary and sufficient condition for $\cH^{s}\big(\cS^\times_n(\psi;\bm\theta) \cap\I^n  \big)$ to be zero or infinite in higher dimensions.

\subsection{Proof of Theorem~\ref{t1}}

To simplify notation the  symbols $\ll$
and $\gg$ will be used to indicate an inequality with an
unspecified positive multiplicative constant. If $a \ll b$ and $a
\gg b$ we write $ a \asymp b $, and  say that the quantities $a$
and $b$ are comparable.  For a real number $x$, the quantity $\{x\}$ will denote the fractional part of $x$  and $[x]$ the integer part of $x$.

Without loss of generality, throughout the proof of  Theorem \ref{t1}  we can assume that $\bm\theta=(\theta_1,\theta_2)\in\I^2$.

\subsubsection{A covering of $\cSM_2(\psi,\bm\theta) \cap \I^2$}

In this section we obtain an effective covering of the set $\cSM_2(\psi, \bm\theta)   \cap \I^2 $ that will be used in establishing the convergence case of Theorem~\ref{t1}.

\begin{lemma}\label{l1}
Let $0<\ve<1$, $(x_1,x_2)\in\I^2$, $(\theta_1,\theta_2)\in\I^2$, $q\in\N$ and
\begin{equation}\label{x1}
    \prod_{i=1}^2\|qx_i-\theta_i\|<\ve\,.
\end{equation}
Then there exist $m\in\Z$ and $p_1,p_2\in\{-1,0,\dots,q\}$ such that
$$
\|qx_i-\theta_i\|=|qx_i-\theta_i-p_i|\qquad\text{for }i=1,2\,,
$$
\begin{equation}\label{e1}
    \|qx_1-\theta_1\|<2^{m}\sqrt{2\ve},\qquad
    \|qx_2-\theta_2\|<2^{-m}\sqrt{2\ve}
\end{equation}
and
\begin{equation}\label{e1x}
2^{|m|}\sqrt{\ve}\le 1\,.
\end{equation}
\end{lemma}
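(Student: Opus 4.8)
Write $a_i:=\|qx_i-\theta_i\|$ for $i=1,2$, so that \eqref{x1} reads $a_1a_2<\ve$; being distances to the nearest integer, $0\le a_i\le\tfrac12$. The integers $p_1,p_2$ cost nothing and are determined independently of $m$: since $x_i,\theta_i\in\I$ and $q\ge1$ we have $qx_i-\theta_i\in[-1,q]$, so the integer $p_i$ nearest to $qx_i-\theta_i$ lies in $\{-1,0,\dots,q\}$ and $a_i=|qx_i-\theta_i-p_i|$ by the definition of $\|\cdot\|$. Thus the whole content of the lemma is the existence of $m\in\Z$ satisfying \eqref{e1} and \eqref{e1x}. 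As \eqref{e1} is invariant under interchanging the two coordinates together with $m\mapsto-m$, I may assume $a_1\ge a_2$ and look for $m\ge0$ (if $a_1<a_2$, apply the result to $(x_2,x_1)$, $(\theta_2,\theta_1)$ and replace $m$ by $-m$).

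Set $M:=\big\lfloor-\tfrac12\log_2\ve\big\rfloor$. Since $0<\ve<1$ we have $M\ge0$, and the definition of the integer part gives
\begin{equation}\label{Mbound}
\sqrt\ve\ \le\ 2^{-M}\ <\ 2\sqrt\ve\,,\qquad\text{equivalently}\qquad 2^{M}\sqrt\ve\le1<2^{M+1}\sqrt\ve\,.
\end{equation}
The idea behind the choice of $m$ is that the ``ideal'' value is $2^{m}=\sqrt{a_1/a_2}$: with this value $2^{m}\sqrt{2\ve}=\sqrt{2\ve a_1/a_2}>\sqrt2\,a_1$ and $2^{-m}\sqrt{2\ve}=\sqrt{2\ve a_2/a_1}>\sqrt2\,a_2$ (both because $a_1a_2<\ve$), so \eqref{e1} would hold with a whole factor $\sqrt2$ to spare --- precisely the slack that is eaten up by rounding to an integer. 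However $\sqrt{a_1/a_2}$ may be far too large to allow \eqref{e1x}, so it must be capped at level $M$. Accordingly I set
\[
m:=\begin{cases}M,& a_2=0,\\[0.4ex]\min\{M,\,m^{*}\},& a_2>0,\end{cases}\qquad\text{where }m^{*}\text{ is the integer nearest to }\tfrac12\log_2(a_1/a_2).
\]
Because $a_1\ge a_2$, one has $m^{*}\ge0$, hence $m\ge0$ and $|m|=m\le M$; with \eqref{Mbound} this yields $2^{|m|}\sqrt\ve\le2^{M}\sqrt\ve\le1$, i.e. \eqref{e1x}.

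It remains to check \eqref{e1}, in two cases. \emph{Case A: $a_2>0$ and $m^{*}\le M$} (so $m=m^{*}$). Here $a_1\ge a_2>0$ and $\bigl|m-\tfrac12\log_2(a_1/a_2)\bigr|\le\tfrac12$, i.e. $2^{-1/2}\sqrt{a_1/a_2}\le2^{m}\le2^{1/2}\sqrt{a_1/a_2}$. Using $a_1a_2<\ve$,
\[
2^{m}\sqrt{2\ve}\ \ge\ \sqrt{a_1/a_2}\,\sqrt\ve\ =\ \sqrt{a_1\ve/a_2}\ >\ \sqrt{a_1\cdot a_1a_2/a_2}\ =\ a_1\,,
\]
and, symmetrically, $2^{-m}\sqrt{2\ve}\ge\sqrt{a_2\ve/a_1}>a_2$; this is \eqref{e1}. \emph{Case B: $a_2=0$, or $a_2>0$ and $m^{*}>M$} (so $m=M$). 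The first inequality of \eqref{e1} follows from \eqref{Mbound}: $2^{M}\sqrt{2\ve}>\tfrac1{2\sqrt\ve}\cdot\sqrt{2\ve}=\tfrac1{\sqrt2}>\tfrac12\ge a_1$. For the second: if $a_2=0$ it is trivial; if $a_2>0$ then $m^{*}>M$ forces $\tfrac12\log_2(a_1/a_2)\ge M+\tfrac12$, i.e. $a_1/a_2\ge2^{2M+1}$, so $a_2\le a_1\,2^{-2M-1}\le2^{-2M-2}$, and since \eqref{Mbound} gives $2^{-2M}<4\ve$ and $2^{-M}\sqrt{2\ve}\ge\sqrt\ve\,\sqrt{2\ve}=\sqrt2\,\ve$, we obtain $a_2\le2^{-2M-2}<\ve<2^{-M}\sqrt{2\ve}$. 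This establishes \eqref{e1} in both cases and finishes the proof.

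\medskip
\noindent The only subtle point is the conflict between the ideal balancing choice $2^{m}\approx\sqrt{a_1/a_2}$ and the hard ceiling $|m|\le M$ imposed by \eqref{e1x}: all the work is in Case B, where one is forced to take $m=M$ and must still recover \eqref{e1} --- which succeeds only because $a_1,a_2\le\tfrac12$ (giving $a_1<2^{M}\sqrt{2\ve}$ outright) and, when $a_2>0$, $a_2$ is then necessarily tiny.
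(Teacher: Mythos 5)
Your proof is correct: the choice of $p_1,p_2$, the reduction to $a_1\ge a_2$, the bound $M=\lfloor-\tfrac12\log_2\ve\rfloor$, and both cases A and B check out, with all the strict inequalities in \eqref{e1} and the non-strict one in \eqref{e1x} landing where they should (including the degenerate case $a_2=0$). However, you take a genuinely different, and more elaborate, route than the paper. You pick $m$ by balancing the ratio, i.e. as the nearest integer to $\tfrac12\log_2(a_1/a_2)$, and then impose the ceiling $m\le M$ by hand, which forces the case split and the separate treatment of $a_2=0$ (division by $a_2$). The paper instead reads $m$ off the dyadic size of the larger coordinate alone: if both $a_i<\sqrt{2\ve}$ take $m=0$; otherwise (after the same WLOG) let $m\ge 0$ be the unique integer with $2^{m-1}\sqrt{2\ve}\le a_1<2^{m}\sqrt{2\ve}$. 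With that choice the first inequality of \eqref{e1} is the definition of $m$, the cap \eqref{e1x} is automatic from $a_1\le\tfrac12$ (so no explicit $M$ is needed), and the second inequality of \eqref{e1} falls out in one line from the product hypothesis \eqref{x1}: $2^{m-1}\sqrt{2\ve}\,a_2\le a_1a_2<\ve$. What your version buys is that the balancing heuristic is explicit and in the generic case both inequalities hold with a factor $\sqrt2$ to spare; what the paper's version buys is brevity and robustness: no division by $a_2$, no cap case, no nearest-integer bookkeeping, since only the larger coordinate is ever used to define $m$.
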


\begin{proof}
The existence of $p_i\in\{-1,0,\dots,q\}$ with $|qx_i-p_i-\theta_i|=\|qx_i-\theta_i\|$ is an immediate consequence of the fact that $x_i,\theta_i\in\I$. Thus, the only thing that we need to prove is the existence of $m$ satisfying (\ref{e1}) and (\ref{e1x}).

If $\|qx_i-\theta_i\|<\sqrt{2\ve}$ for each $i=1,2$ then we can define $m=0$. In this case (\ref{e1}) is obvious and (\ref{e1x}) is a consequence of the fact that $0<\ve<1$.

Without loss of generality, assume that $\|qx_1-\theta_1\|\ge \sqrt{2\ve}$ and let $m\in\Z$ be the unique integer such that
$$
2^{m-1}\sqrt{2\ve}\le \|qx_1-\theta_1\|<2^{m}\sqrt{2\ve}\,.
$$
Since $\|qx_1-\theta_1\|\ge\sqrt{2\ve}$, we have that $m\ge0$. Furthermore, since $\|qx_1-\theta_1\|\le1/2$, we have that $2^m\sqrt\ve<1$ whence (\ref{e1x}) follows. The left hand side of (\ref{e1}) holds by the definition of $m$. To show the right hand side of (\ref{e1x}) we use (\ref{x1}). Indeed, we have that
$$
2^{m-1}\sqrt{2\ve}\|qx_2-\theta_2\|\le \prod_{i=1}^2\|qx_i-\theta_i\|<\ve
$$
whence the right hand side of (\ref{e1x})  follows. This completes the proof of the lemma.
\end{proof}

\begin{lemma}\label{l2}
Let $\psi:\N\to[0,1)$ be decreasing and $\bm\theta=(\theta_1,\theta_2)\in\I^2$. Then for any $\ell\in\N$
\begin{equation}
\cSM_2(\psi;\bm\theta) \cap \I^2  \ \subset  \ \bigcup_{t=\ell}^\infty\ \
\bigcup_{2^t\le q< 2^{t+1}}\ \bigcup_{\substack{m\in\Z\\[0.5ex] 2^{|m|}\sqrt{\psi(2^t)}\le 1}}\ \bigcup_{p_1=-1}^q \ \bigcup_{p_2=-1}^q S_{\bm\theta}(q,m,p_1,p_2) \ ,
\end{equation}
where
$$
S_{\bm\theta}(q,m,p_1,p_2):=\left\{(x_1,x_2)\in\I^2: \begin{array}{l}
\left|x_1-\dfrac{p_1+\theta_1}{q}\right|< \dfrac{2^{m}\sqrt{2\psi(2^t)}}{2^t}\\[3ex]
\left|x_2-\dfrac{p_2+\theta_2}{q}\right|< \dfrac{2^{-m}\sqrt{2\psi(2^t)}}{2^t}\
                                           \end{array}
 \right\}\,.
$$
\end{lemma}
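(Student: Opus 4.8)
The plan is to reduce Lemma~\ref{l2} to Lemma~\ref{l1} by a dyadic decomposition of the range of the denominator $q$. First I would take an arbitrary point $(x_1,x_2)\in\cSM_2(\psi;\bm\theta)\cap\I^2$. By definition of the set $\cSM_2(\psi;\bm\theta)$, there are infinitely many $q\in\N$ with $\|qx_1-\theta_1\|\,\|qx_2-\theta_2\|<\psi(q)$. In particular, given any $\ell\in\N$ there is such a $q$ with $q\ge 2^\ell$, so there is a (unique) integer $t\ge\ell$ with $2^t\le q<2^{t+1}$. Since $\psi$ is decreasing and $q\ge 2^t$, we have $\psi(q)\le\psi(2^t)$, hence
\begin{equation*}
\prod_{i=1}^2\|qx_i-\theta_i\| \;<\; \psi(q)\;\le\;\psi(2^t)\;=:\;\ve\,.
\end{equation*}
Here $0<\ve<1$ because $\psi$ takes values in $[0,1)$ and, after discarding any $q$ for which $\psi(q)=0$ (which cannot occur infinitely often unless the product is never $<0$, i.e. the condition is vacuous — more simply, we may assume $\psi>0$ as part of being an approximating function), $\ve>0$.

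Next I would apply Lemma~\ref{l1} with this choice of $\ve$, $q$, $(x_1,x_2)$ and $(\theta_1,\theta_2)$. It yields $m\in\Z$ and $p_1,p_2\in\{-1,0,\dots,q\}$ such that $\|qx_i-\theta_i\|=|qx_i-\theta_i-p_i|$ for $i=1,2$, together with the bounds \eqref{e1} and \eqref{e1x}. Dividing \eqref{e1} through by $q$ and using $q\ge 2^t$ gives
\begin{equation*}
\left|x_1-\frac{p_1+\theta_1}{q}\right| \;=\;\frac{\|qx_1-\theta_1\|}{q}\;<\;\frac{2^m\sqrt{2\ve}}{q}\;\le\;\frac{2^m\sqrt{2\psi(2^t)}}{2^t}\,,
\end{equation*}
and symmetrically for the second coordinate with $2^{-m}$ in place of $2^m$; the constraint \eqref{e1x}, namely $2^{|m|}\sqrt{\ve}\le 1$, becomes exactly $2^{|m|}\sqrt{\psi(2^t)}\le 1$. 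Therefore $(x_1,x_2)\in S_{\bm\theta}(q,m,p_1,p_2)$ for this particular $t\ge\ell$, this $q$ with $2^t\le q<2^{t+1}$, this admissible $m$, and these $p_1,p_2\in\{-1,\dots,q\}$. Since $(x_1,x_2)$ was arbitrary in $\cSM_2(\psi;\bm\theta)\cap\I^2$, this establishes the claimed inclusion.

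There is essentially no obstacle here: the lemma is a bookkeeping step that repackages Lemma~\ref{l1} as an explicit cover. The only minor point to be careful about is that one needs a value of $q$ lying \emph{above} the threshold $2^\ell$ — this is precisely where the "infinitely many $q$" in the definition of $\cSM_2(\psi;\bm\theta)$ is used, and it is what makes the outer union $\bigcup_{t=\ell}^\infty$ (rather than a union starting from $t=0$) legitimate for every $\ell$. This freedom in $\ell$ is exactly what will later allow the convergence part of the Hausdorff measure estimate: one sums the diameters of the sets $S_{\bm\theta}(q,m,p_1,p_2)$ raised to the power $s$ over the tail $t\ge\ell$ and lets $\ell\to\infty$.
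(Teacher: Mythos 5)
Your proposal is correct and follows essentially the same route as the paper: a dyadic decomposition of $q$, the monotonicity bound $\psi(q)\le\psi(2^t)$, and an application of Lemma~\ref{l1} with $\ve=\psi(2^t)$ (the paper phrases this set-wise via the standard $\limsup$ decomposition, while you argue pointwise, but the content is identical). Your extra remarks on dividing through by $q\ge 2^t$ and on the trivial case $\psi(2^t)=0$ are fine and only make explicit what the paper leaves implicit.
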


\begin{proof}
It is easily verified that
$$
  \cSM_2(\psi;\bm\theta) \cap \I^2 \ =   \ \bigcap_{\ell=1}^\infty \ \bigcup_{t=\ell}^\infty\ \bigcup_{2^t\le q< 2^{t+1}}
\Big\{(x_1,x_2)\in\I^2 :\prod_{i=1}^2\|qx_i-\theta_i\|<\psi(q)\Big\}.
$$
Since $\psi$ is decreasing, $\psi(q)\le\psi(2^t)$ for  $2^t\le q<2^{t+1}$. Then,
for any $\ell\in\N$
\begin{equation}\label{e8}
\cSM_2(\psi;\bm\theta)  \ \subset \  \bigcup_{t=\ell}^\infty\
\bigcup_{2^t\le q< 2^{t+1}}\
\Big\{(x_1,x_2)\in\I^2 :\prod_{i=1}^2\|qx_i-\theta_i\|<\psi(2^t)\Big\}.
\end{equation}
For a fixed pair of $t$ and $q$ with $2^t\le q<2^{t+1}$, by Lemma~\ref{l1} with $\ve=\psi(2^t)$, we get that
$$
\Big\{(x_1,x_2)\in\I^2 :\prod_{i=1}^2\|qx_i-\theta_i\|<\psi(2^t)\Big\}\subset
\bigcup_{\substack{m\in\Z\\[0.5ex] 2^{|m|}\sqrt{\psi(2^t)}\le 1}}\ \bigcup_{p_1=-1}^q \ \bigcup_{p_2=-1}^q S_{\bm\theta}(q,m,p_1,p_2)\,.
$$
This together with (\ref{e8}) completes the proof of the lemma.
\end{proof}

\subsubsection{Hausdorff measure and dimension  \label{HMD}}

We briefly recall various facts regarding  Hausdorff measures that will be used in the course of establishing  Theorem~\ref{t1}.  Given $\delta>0$ and a set $X\subset\R^n$, any finite or countable collection $\{B_i\}$ of subsets of $\R^n$ such that
$$
X\subset \bigcup_i B_i\qquad(\text{i.e. }\{B_i\}\text{ is a cover for }X)
$$
and
$$
\diam B_i\le \delta\quad\text{for all }i
$$

\noindent is called a \emph{$\delta$-cover} of $X$. Given  a real number $s$, let
$$
\cH^s_\delta(X):=\inf_{\{B_i\}}\sum_i\diam(B_i)^s\,,
$$
where the infimum is taken over all possible $\delta$-covers  $\{B_i\}$ of $X$. The {\em $s$-dimensional Hausdorff measure} $\cH^s(X)$ of $X$ is defined to be
$$
\cH^s(X):=\lim_{\delta\to0^+}\cH^s_\delta(X)\,
$$
and the {\em Hausdorff dimension} $ \dim  X$ of $X$ by
$$
\dim X  := \inf \{ s: \cH^s(X) =0 \} =\sup  \{s: \cH^s(X) =  \infty \}\,.
$$

The countable collection $\{B_i\}$ is called a \emph{fine cover of $X$} if for every $\delta>0$ it contains a subcollection that is a $\delta$-cover of $X$. The following statement is an immediate and well known consequence of the definition of $\cH^s$.

\begin{lemma}\label{l3}
Let $\{B_i\}$ be a fine cover of $X$ and $s>0$ be such that
$
\sum_{i}\diam(B_i)^s<\infty\,.
$
Then
$$
\cH^s(X)=0.
$$
\end{lemma}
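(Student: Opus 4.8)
The plan is to unwind the definitions of $\cH^s$ and of a fine cover and to use the convergence of the series to make the total $s$-cost of a suitable sub-cover arbitrarily small. Recall that $\cH^s(X)=\lim_{\delta\to0^+}\cH^s_\delta(X)$ and that $\cH^s_\delta(X)$ is non-decreasing as $\delta$ shrinks; hence it is enough to produce, for every $\ve>0$, some $\delta>0$ and a $\delta$-cover $\cB'$ of $X$ with $\sum_{B\in\cB'}\diam(B)^s<\ve$.

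First I would observe that, since $\sum_i\diam(B_i)^s<\infty$, for a given $\ve>0$ there is a finite index set $I_0$ with $\sum_{i\notin I_0}\diam(B_i)^s<\ve$. Since $I_0$ is finite, the number $\delta^*:=\min\{\diam(B_i):i\in I_0,\ \diam(B_i)>0\}$ (taken to be, say, $1$ if this set is empty) is strictly positive. The role of $\delta^*$ is that for $\delta<\delta^*$ no $B_i$ with $i\in I_0$ and positive diameter can lie inside a $\delta$-cover.

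Then I would invoke the fine cover hypothesis: for any $\delta$ with $0<\delta<\delta^*$ the collection $\{B_i\}$ contains a subcollection $\cB_\delta$ that is a $\delta$-cover of $X$. Every member of $\cB_\delta$ has diameter at most $\delta<\delta^*$, so by the choice of $\delta^*$ every $B_i\in\cB_\delta$ with $\diam(B_i)>0$ satisfies $i\notin I_0$, while the members of diameter zero contribute nothing since $0^s=0$ for $s>0$. Hence $\sum_{B\in\cB_\delta}\diam(B)^s\le\sum_{i\notin I_0}\diam(B_i)^s<\ve$, so $\cH^s_\delta(X)<\ve$ for all $\delta\in(0,\delta^*)$ and therefore $\cH^s(X)\le\ve$. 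Letting $\ve\to0$ yields $\cH^s(X)=0$.

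The one point that needs care is that a fine cover only guarantees the \emph{existence} of a $\delta$-sub-cover, with no control over which sets it uses, so one cannot simply delete the large sets by hand; instead one arranges, by choosing $\delta^*$ relative to the finite block $I_0$, that any large set is automatically barred from a $\delta$-cover. Beyond this bookkeeping (and the trivial treatment of zero-diameter sets) I expect no real obstacle, which is consistent with the remark in the text that the lemma is an immediate consequence of the definition of $\cH^s$.
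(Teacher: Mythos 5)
Your proof is correct, and since the paper states Lemma~\ref{l3} without proof (calling it an immediate consequence of the definition of $\cH^s$), your argument is exactly the standard one the authors have in mind: the fine-cover property supplies $\delta$-subcovers for arbitrarily small $\delta$, and the convergence of $\sum_i\diam(B_i)^s$ forces their $s$-cost below any $\ve$, so $\cH^s_\delta(X)<\ve$ and hence $\cH^s(X)=0$. Your extra care with the finite block $I_0$, the threshold $\delta^*$, and the zero-diameter sets is sound bookkeeping and does not change the substance.
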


\subsubsection{Proof: the convergence case}

We are given that $\sum_{q=1}^\infty  q^{2-s} \psi(q)^{s-1}<\infty$.  As already mentioned, we can assume that $\bm\theta\in\I^2$.  The proof will make use of the covering of $\cSM_2(\psi;\bm\theta) \cap \I^2$ given by Lemma~\ref{l2}. The rectangle $S_{\bm\theta}(q,m,p_1,p_2)$ arising from this lemma has sides of lengths
$$
A:=\frac{2^{-|m|+1}\sqrt{2\psi(2^t)}}{2^t}\qquad\text{and}\qquad B:=\frac{2^{|m|+1}\sqrt{2\psi(2^t)}}{2^t}
$$
and so can be split into $B/A=2^{2|m|}$ squares with sidelength $A$. By Lemma~\ref{l2}, the collection of such squares taken over $t\ge\ell$ and over $q,p_1,p_2,m$ as specified in the lemma is a $\delta$-cover of $\cSM_2(\psi;\bm\theta) \cap \I^2 $ with $\delta:=\sqrt2A\to0$ as $\ell\to\infty$.
Therefore, the collection of all such squares, say $\{B_i\}$, is a fine cover of $\cSM_2(\psi;\bm\theta) \cap \I^2 $. It follows that
\begin{align}
\sum_{i}\diam(B_i)^s & \ll \ \sum_{t=0}^\infty \ \sum_{2^t\le q< 2^{t+1}} \!\! \sum_{\substack{m\in\Z\\ 2^{|m|}\sqrt{\psi(2^t)}\le 1}}\ \sum_{p_1=-1}^q \ \sum_{p_2=-1}^q
\left(2^{-|m|}\ \frac{\sqrt{\psi(2^t)}}{2^t}\right)^{s} 2^{2|m|} \nonumber \\[1ex]
& \ll \ \sum_{t=0}^\infty\ \sum_{\substack{m\in\Z\\ 2^{|m|}\sqrt{\psi(2^t)}\le 1}} \left(2^{-|m|}\ \frac{\sqrt{\psi(2^t)}}{2^t}\right)^{s} 2^{2|m|}\ 2^{3t} \nonumber \\[1.5ex]
\label{eq2x}& \ll   \ \sum_{t=0}^\infty  \left(\frac{\sqrt{\psi(2^t)}}{2^t}\right)^{s}  2^{3t} \!\!\! \sum_{\substack{m\in\Z\\ 2^{|m|}\sqrt{\psi(2^t)}\le 1}} 2^{(2-s)|m|}\,  .
\end{align}
Since $1<s<2$, the sum over $m\not=0$ in the right hand side of \eqref{eq2x} is a finite increasing geometric progression, which is easily estimated to give
\begin{equation}\label{s1}
\sum_{\substack{m\in\Z\\ 2^{|m|}\sqrt{\psi(2^t)}\le 1}} 2^{(2-s)|m|}
\ll \left(\frac{1}{\sqrt{\psi(2^t)}}\right)^{2-s}=\big(\sqrt{\psi(2^t)}\,\big)^{s-2}\,.
\end{equation}
Substituting this into \eqref{eq2x} gives
\begin{align}
\sum_{i}\diam(B_i)^s& \ll \sum_{t=\ell}^\infty  \left(\frac{\sqrt{\psi(2^t)}}{2^t}\right)^{s}  2^{3t} \ \big(\sqrt{\psi(2^t)}\,\big)^{s-2}\nonumber \\[1ex]
& = \sum_{t=\ell}^\infty  2^{(3-s)t}\psi(2^t)^{s-1}  \ll \sum_{q=1}^\infty  q^{2-s} \psi(q)^{s-1} <\infty\,.
\nonumber
\end{align}
By Lemma~\ref{l3}, $\cH^{s}(\cSM_2(\psi;\bm\theta) \cap \I^2) =0$ and thus the proof of the convergence part is complete.

\subsubsection{Proof: the divergence case}

We are given that $\sum_{q=1}^\infty  q^{2-s} \psi(q)^{s-1}=\infty$. Then, by the inhomogeneous version of Jarn\'ik's theorem \cite{Bugeaud} (see also the remark in \cite[\S12.1]{BDV06}), it follows that $\cH^{s-1}(\cS_1(\psi;\theta_1) \cap \I)=\infty$. The observation that led  to \eqref{vb3} is equally valid in the inhomogeneous  setup; that is to say that
$$
\cS_1(\psi;\theta_1)\times\R   \ \subset  \  \cSM_2(\psi;\bm\theta).
$$
Thus,  $\cH^{s}\big(\cSM_2(\psi;\bm\theta) \cap \I^2\big)\ge \cH^{s}\big((\cS_1(\psi;\theta_1) \cap \I) \times\I\big)$.
Since $\cH^{s-1}(\cS_1(\psi;\theta_1) \cap \I )=\infty$,  the  slicing lemma \cite[Lemma~4]{BV06Sliching} implies that
$$
\cH^{s}\big((\cS_1(\psi;\theta_1) \cap \I) \times\I\big)=\infty\,.
$$
Hence $\cH^{s}\big(\cSM_2(\psi;\bm\theta) \cap \I^2\big)=\infty$ and the proof of Theorem~\ref{t1} is complete.

\section{Diophantine approximation on planar curves}

When the coordinates of the approximated point $\vv x \in \R^n$ are confined by functional relations, we fall into the theory  of Diophantine approximation on manifolds \cite{BD99}. Over the last decade or so, the theory of Diophantine approximation on manifolds has developed at  some considerable pace with the catalyst being the pioneering work of Kleinbock \& Margulis.  For details of this and an overview of the pretty much complete results regarding $\cS_n(\psi)$  restricted to manifolds $\cM \subset \R^n$ see \cite{SDA, BDV06} and references within. However, much less is  known regarding  multiplicative Diophantine approximation on manifolds.  It would be highly desirable to address this imbalance by investigating the following analogue of Problem~1 for manifolds.

\medskip

\textbf{Problem 2:} \textit{Determine the Hausdorff measure $\cH^s$ of $\cSM_n(\psi) \cap  \cM$.}

\medskip

\noindent Our goal in this paper  is to  consider  the  problem  in the case $\cM$ is a planar curve $\cC$  and  $\cH^s$ is a  genuine  fractal  measure.

\begin{theorem}\label{t2}
Let $\psi$ be any approximating function, $\bm\theta=(\theta_1,\theta_2)\in\R^2$ and $s \in (0,1)$.   Let  $\cC$ be a $C^{(3)}$ curve in $\R^2$ with non-zero curvature everywhere apart from a set of $s$-dimensional Hausdorff measure zero. Then
\begin{equation}\label{e:006}
\cH^s\big(\cS^\times_2(\psi;\bm\theta) \cap \cC \big) \, = \,
\left\{\begin{array}{ll}
0  & {\rm if} \;\;\;
\textstyle{\sum_{q=1}^\infty} \; q^{1-s}\psi^s(q) \; <\infty  ,\\[2ex]
\infty & {\rm if} \;\;\;
\textstyle{ \sum_{q=1}^\infty } \;  q^{1-s}\psi^s(q) \;
 =\infty  .
\end{array}\right.
\end{equation}
\end{theorem}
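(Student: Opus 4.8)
The plan is to treat the two halves of \eqref{e:006} separately: the divergence half is quick and reduces to the one‑dimensional inhomogeneous theory along a single sub‑arc of $\cC$, while the convergence half is the heart of the matter and rests on the covering of $\cS^\times_2(\psi;\bm\theta)$ furnished by Lemma~\ref{l2}, sharpened by the curvature of $\cC$.

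\emph{Divergence.} As $\cC$ is $C^{(3)}$ and its curvature is not identically zero, its tangent direction is non‑constant, so near some point $\cC$ is not a coordinate segment and hence contains a sub‑arc $\cC_0$ which is a graph $x_2=f(x_1)$, $x_1\in J$, over a non‑degenerate interval $J$, with $f\in C^1$. If $x_1\in\cS_1(\psi;\theta_1)\cap J$ then, since $\|qf(x_1)-\theta_2\|\le\frac12$, we have $\|qx_1-\theta_1\|\,\|qf(x_1)-\theta_2\|<\psi(q)$ for infinitely many $q$, so $(x_1,f(x_1))\in\cS^\times_2(\psi;\bm\theta)\cap\cC_0$; this is the curve analogue of the inclusion $\cS_1(\psi;\theta_1)\times\R\subset\cS^\times_2(\psi;\bm\theta)$ used in the proof of Theorem~\ref{t1}. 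Since $x_1\mapsto(x_1,f(x_1))$ is bi‑Lipschitz on $J$, Hausdorff measures are comparable, whence $\cH^s(\cS^\times_2(\psi;\bm\theta)\cap\cC)\gg\cH^s(\cS_1(\psi;\theta_1)\cap J)$. As $\sum_q q^{1-s}\psi^s(q)=\infty$, the divergence part of the one‑dimensional inhomogeneous Khintchine–Jarn\'ik theorem, which holds on every interval, gives $\cH^s(\cS_1(\psi;\theta_1)\cap J)=\cH^s(J)=\infty$ because $s<1$. (If $\cC$ admitted no non‑vertical graph sub‑arc one would argue identically with the two coordinates, and $\theta_1$ and $\theta_2$, interchanged.)

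\emph{Convergence: set‑up.} Now $\sum_q q^{1-s}\psi^s(q)<\infty$. The closed set $\{P\in\cC:\kappa(P)=0\}$ has $\cH^s$‑measure zero by hypothesis, so it contributes nothing and may be removed; its complement is a countable union of compact sub‑arcs on each of which $0<c_0\le|\kappa|\le C_0$, and each of these splits into finitely many pieces $\cC_0$ that are graphs $x_2=f(x_1)$ ($x_1\in I_0$) over one coordinate axis with bounded slope and $|f''|\asymp1$; it therefore suffices to prove $\cH^s(\cS^\times_2(\psi;\bm\theta)\cap\cC_0)=0$. By Lemma~\ref{l2}, $\cS^\times_2(\psi;\bm\theta)\cap\cC_0$ is contained in the union of the sets $S_{\bm\theta}(q,m,p_1,p_2)\cap\cC_0$ over $t\ge\ell$, $2^t\le q<2^{t+1}$, $2^{|m|}\psi(2^t)^{1/2}\le1$ and $p_1,p_2$. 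Each rectangle $S_{\bm\theta}(q,m,p_1,p_2)$ has side‑lengths $\asymp b\ge a$ with $ab\asymp\psi(2^t)2^{-2t}$, namely $b\asymp 2^{|m|}\psi(2^t)^{1/2}2^{-t}$ and $a\asymp 2^{-|m|}\psi(2^t)^{1/2}2^{-t}$, and the curvature of $\cC_0$ now enters twice. First, a $C^{(2)}$ arc with $|\kappa|\asymp1$ cannot remain in the strip of width $a$ cut out by $S_{\bm\theta}$ for an $x$‑length exceeding $\ll a^{1/2}$; hence $S_{\bm\theta}(q,m,p_1,p_2)\cap\cC_0$ is covered by $\ll1$ square of side $\asymp\min(b,a^{1/2})$. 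Second, the number of active rectangles at level $t$ is controlled by the number of rationals $\mathbf p/q$ with $q\asymp2^t$ lying in the corresponding $a$‑neighbourhood of $\cC_0$, for which one invokes the standard count of rational points near a non‑degenerate planar curve, of the shape $\ll 2^{3t}\cdot(\text{area of the neighbourhood})+2^t\log 2^t$.

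\emph{Summation, and the main obstacle.} Feeding these bounds into the definition of $\cH^s_\delta$, the contribution of level $t$ and eccentricity $m$ is $\ll(\text{number of active rectangles})\times\min(b,a^{1/2})^s$; summing over $m$ from $0$ to $\asymp\log_2\psi(2^t)^{-1/2}$, the leading ``area'' term of the count contributes $\ll 2^{(2-s)t}\psi(2^t)^s$, and (since $\psi$ is monotonic) $\sum_t 2^{(2-s)t}\psi(2^t)^s\asymp\sum_q q^{1-s}\psi^s(q)<\infty$ — remarkably the logarithmic factor of Gallagher's volume sum is exactly absorbed by the curvature savings and never reappears, which is precisely why \eqref{e:006} carries the same sum as the one‑dimensional Jarn\'ik theorem \eqref{jarnik}. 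The delicate step, which I expect to be the main obstacle, is to dispose of the thinnest, most eccentric rectangles — those with $|m|$ near the cap $\log_2\psi(2^t)^{-1/2}$, for which $S_{\bm\theta}$ imposes essentially no restriction on $x_1$ and the arc $\cC_0$ meets it near a point of horizontal tangency — together with the $2^t\log 2^t$ error term in the rational‑point count; it is here that the full strength of the $C^{(3)}$ hypothesis (via the refined counting of rationals near curves, equivalently the analysis of the one‑dimensional inhomogeneous problem for the other coordinate on the bounded‑slope graph pieces) is needed, and it is exactly the kind of point that is easy to overlook. Granting this, one obtains a fine cover $\{B_i\}$ of $\cS^\times_2(\psi;\bm\theta)\cap\cC$ with $\sum_i\diam(B_i)^s\ll\sum_q q^{1-s}\psi^s(q)<\infty$, and Lemma~\ref{l3} gives $\cH^s\big(\cS^\times_2(\psi;\bm\theta)\cap\cC\big)=0$, completing the proof.
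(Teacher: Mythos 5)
Your divergence argument is essentially the paper's own (the inclusion of the graph of $\cS_1(\psi;\theta_1)\cap J$ into $\cSM_2(\psi;\bm\theta)\cap\cC$, bi-Lipschitz invariance, and the inhomogeneous Jarn\'ik divergence theorem), and your convergence strategy follows the same route as the paper: the covering from Lemma~\ref{l2}, a count of shifted rational points near the curve (Theorem~\ref{BVV}), and Lemma~\ref{l3}. But the convergence half has a genuine gap, and it is precisely the step you flag and then defer with ``granting this''. With your covering estimate $\diam(\cC\cap S_{\bm\theta})\ll\min(b,a^{1/2})$, where $a\asymp 2^{-|m|}\sqrt{\psi(2^t)}\,2^{-t}$ and $b\asymp 2^{|m|}\sqrt{\psi(2^t)}\,2^{-t}$, the summation does not close: at the top of the range, $2^{|m|}\asymp\psi(2^t)^{-1/2}$, one has $a^{1/2}\asymp\psi(2^t)^{1/2}2^{-t/2}$ while the area term of the correct count is $\asymp 2^{2t}$, so these $m$ contribute $\asymp 2^{(2-s/2)t}\psi(2^t)^{s/2}$, which can diverge even when $\sum_q q^{1-s}\psi^s(q)<\infty$ (take $\psi(q)=q^{-3/s}$: the hypothesis sum is $\sum q^{-2-s}<\infty$, yet $2^{(2-s/2)t}\psi(2^t)^{s/2}=2^{(1-s)t/2}\to\infty$). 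There is also a secondary slip in your counting step: a rectangle $S_{\bm\theta}(q,m,p_1,p_2)$ meets the graph piece as soon as its centre $((p_1+\theta_1)/q,(p_2+\theta_2)/q)$ lies within $\ll b$ (the \emph{long} side) of the curve, so the parameter in Theorem~\ref{BVV} must be $\delta\asymp 2^{|m|}\sqrt{\psi(2^t)}$; counting only rationals in the $a$-neighbourhood undercounts the active rectangles and is not a valid upper bound.

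The paper closes the argument by using the stronger diameter bound \eqref{eS:028}, namely $\diam(\cC\cap S_{\bm\theta})\ll 2^{-|m|}\sqrt{\psi(2^t)}\,2^{-t}$ (the short side for \emph{every} $m$), together with $\delta\asymp 2^{|m|}\sqrt{\psi(2^t)}$ in the counting; the sum over $m$ is then the geometric series \eqref{s1}, the leading term collapses to exactly $2^{(2-s)t}\psi(2^t)^s$ (so at the extreme $|m|$ the contribution is precisely the borderline summable quantity, not your larger one), and the $Q^{1+\ve}$ error term gives $\ll 2^{-ts/4}$ after invoking \eqref{x10}; no logarithm ever appears. The short-side bound is where the curvature hypothesis is really spent: it holds on graph pieces where $|f'|$ is bounded away from zero, and one reduces to countably many such compact pieces by discarding the finitely many points of horizontal tangency (isolated since $f''\neq0$, hence an $\cH^s$-null set), the piece-dependent constants being irrelevant to convergence. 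Your $\min(b,a^{1/2})$ estimate is correct but too weak exactly near those tangency points, which is why the most eccentric rectangles defeated your summation; to complete the proof you need either this reduction plus the short-side bound, or some substitute argument for large $|m|$, neither of which your proposal supplies.
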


\medskip

\noindent {\em Remark 2.1.}  In \cite{BL}, the authors prove  that $\cH^s\big( \cS^\times_2(\psi) \cap \cC\big)=0$ under the more restrictive assumption that $\sum_{q=1}^\infty \; q^{1-s}\psi^s(q)(\log q)^s \; <\infty$. Although not an error, in their  proof of this homogeneous statement \cite[Theorem~1]{BL} there a certain degree of ambiguity in the manner in which  a key counting estimate originating from \cite[Theorem 1]{VV} is  applied.   More precisely,  it is important to stress that the implied constant appearing in inequality  (13) associated with \cite[Theorem VV]{BL}  is independent of $\psi$. This is crucial  as it is applied over a countable family of functions $\psi(Q)$ that depend on a parameter $m \in \Z$.

\subsection{Proof of Theorem \ref{t2}}

\subsubsection{Rational points near planar curves}

The proof of Theorem~\ref{t2} relies on the results obtained in  \cite{BVV11} regarding the distribution of `shifted'
rational points near planar  curves, which we recall here.
In view of the metrical nature of Theorem~\ref{t2}, there is no loss of generality in
assuming that $\cC:=\big\{(x,f(x)):x\in I\big\}$  is the graph of a $C^{(3)}$ function $f :I\to\R$  defined on a finite closed interval  $I$  and that $f''$ is continuous and non-vanishing on $I$.  By the compactness of $I$, there exists positive  and finite constants $c_1,c_2$ such that
\begin{equation}\label{e:002x}
 c_1 \ \le \  |f''(x)| \ \le \ c_2\qquad\text{for all } \ x\in I\,.
\end{equation}

Given  $\bm\theta =(\theta_1,\theta_2)\in \R^2$,
$\delta > 0 $ and $Q\ge 1$, consider  the set
\begin{equation*}
A_{\bm\theta}(Q,\delta) \ := \ \left\{ (p_1,q) \in \Z \times {\mathbb N} \, : \,
\begin{array}{l}
 Q < q\le 2Q,\ (p_1+\theta_1)/q\in I\\[1ex]
 \|qf(\,(p_1+\theta_1)/q\,)-\theta_2\|<\delta
 \end{array}
 \right\} .
\end{equation*}
The function $N_{\bm\theta}(Q,\delta)=\#A_{\bm\theta}(Q,\delta)$ counts the number of rational
points $(p_1/q,p_2/q)$ with bounded denominator $q$ such that the
shifted points $((p_1+\theta_1)/q,(p_2+\theta_2)/q)$  lie
within the $\delta/Q$-neighborhood of the
curve $\mathcal{C}$. The following result is a direct consequence of Theorem~3 from \cite{BVV11} -- the inhomogeneous generalization of  Theorem 1 from \cite{VV}.

\begin{theorem}\label{BVV}
\label{t:thm1}
Let $f\in C^{(3)}(I)$ and satisfy \eqref{e:002x} and let $\ve>0$. Then, for any $Q\ge 1$ and $0<\delta\le\frac12$ we have that
\begin{equation*}
N_{\bm\theta}(Q,\delta) \ll \delta Q^2 + Q^{1+\ve}
\end{equation*}
where the implied constant is independent of $Q$ and $\delta$.
\end{theorem}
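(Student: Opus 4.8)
The plan is to prove Theorem~\ref{BVV} by a dyadic decomposition of the shift-parameter $\delta$ together with a reduction to the homogeneous counting estimate of \cite{VV}, treating the inhomogeneous problem as a perturbation. First I would fix $Q\ge 1$ and $0<\delta\le\tfrac12$ and split the range of admissible values $\|qf((p_1+\theta_1)/q)-\theta_2\|$ into dyadic blocks, but more to the point I would exploit the structure already present in the excerpt: the quantity being counted, $N_{\bm\theta}(Q,\delta)$, counts rational points $(p_1/q,p_2/q)$ with $Q<q\le 2Q$ whose $\bm\theta$-shifts lie in the $\delta/Q$-neighbourhood of $\cC$. The key point is that Theorem~3 of \cite{BVV11} (the inhomogeneous generalisation of \cite[Theorem~1]{VV}) gives exactly an estimate of this shape, with the crucial feature that the implied constant depends only on the curvature bounds $c_1,c_2$ in \eqref{e:002x} and on $\ve$, and \emph{not} on $\bm\theta$, on $Q$, or on $\delta$. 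So the proof is essentially a matter of quoting that theorem in the normalisation used here and checking the bookkeeping.

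The main steps, in order, are as follows. (i) Reduce to the graph case $\cC=\{(x,f(x)):x\in I\}$ with $f\in C^{(3)}(I)$ and $c_1\le|f''|\le c_2$ on $I$; this has already been arranged in the text preceding the statement. (ii) Observe that $(p_1,q)\in A_{\bm\theta}(Q,\delta)$ precisely says that the point $((p_1+\theta_1)/q,(p_2+\theta_2)/q)$ for the nearest integer $p_2$ lies within vertical distance $\delta/q\asymp\delta/Q$ of $\cC$; this identifies $N_{\bm\theta}(Q,\delta)$ with the counting function appearing in \cite[Theorem~3]{BVV11}. (iii) Apply that theorem, which yields $N_{\bm\theta}(Q,\delta)\ll \delta Q^2+Q^{1+\ve}$ with an implied constant independent of $Q$, $\delta$ and $\bm\theta$. (iv) Record that the two terms have the expected heuristic meaning: $\delta Q^2$ is the expected main term (density of rationals of denominator $\asymp Q$ times the measure of the neighbourhood), while $Q^{1+\ve}$ absorbs the degenerate contribution from points clustering near a single rational or near an inflection-type configuration, and the $\ve$ cannot in general be removed. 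The heart of the matter — the actual geometry of rational points near a curved arc, the use of nonzero curvature to prevent more than $O(Q^\ve)$ rationals of a given denominator from lining up, and the telescoping over dyadic ranges of $q$ — is entirely contained in \cite{BVV11,VV}, so here it can be invoked as a black box.

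The step I expect to be the only real obstacle is (iii): making sure the quoted theorem is applied with the implied constant genuinely uniform in all the relevant parameters. This is precisely the subtlety flagged in Remark~2.1: when Theorem~\ref{BVV} is later used in the proof of Theorem~\ref{t2}, it is applied over a countable family of neighbourhood widths $\delta$ (arising from a parameter $m\in\Z$ through functions of the form $\psi(Q)$ rescaled by $2^{\pm m}$), and if the constant secretly depended on $\delta$ or on the arc under consideration the whole summation argument would collapse. So the care in the proof is not in any hard estimate but in correctly citing \cite[Theorem~3]{BVV11} in a form where the dependence of the implied constant is explicitly only on $(c_1,c_2,\ve)$ and on the length of $I$, and then noting that rescaling $\delta\mapsto 2^{\pm m}\delta$ keeps us inside the hypothesis $0<\delta\le\tfrac12$ whenever the corresponding $m$-constraint (as in Lemma~\ref{l2}) holds. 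With that uniformity pinned down, the statement follows immediately.
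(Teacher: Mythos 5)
Your proposal is correct and matches the paper's treatment: the paper gives no independent argument, stating the bound as a direct consequence of Theorem~3 of \cite{BVV11} (the inhomogeneous analogue of \cite[Theorem~1]{VV}), exactly as you do in steps (i)--(iii). Your emphasis on the implied constant being independent of $\delta$ (and hence of the parameter $m$ when the theorem is later applied in the proof of Theorem~\ref{t2}) is precisely the point the paper highlights in Remark~2.1.
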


\subsubsection{Proof: the convergence case}

We are given that $\sum_{q=1}^\infty  q^{1-s} \psi(q)^{s}<\infty$. Since $\psi$ is monotonic, we have that
\begin{equation}\label{x11}
\sum_{t=1}^\infty  2^{(2-s)t} \psi(2^t)^{s}<\infty.
\end{equation}
Hence
\begin{equation}\label{x10}
2^{(2-s)t}\psi(2^t)^s<1\qquad\text{ for all sufficiently large $t$.}
\end{equation}

As in the proof of Theorem \ref{t1},  without loss of generality we assume that $\bm\theta=(\theta_1,\theta_2)\in\I^2$ and moreover that $\cC\subset \I^2$.
By Lemma~\ref{l2}, for any $\ell\in\N$ we have that
\begin{equation}\label{eS:027}
\cSM_2(\psi;\bm\theta) \cap \cC\ \subset   \  \bigcup_{t=\ell}^\infty\
\bigcup_{2^t\le q< 2^{t+1}}\
\bigcup_{\substack{m\in\Z\\[0.5ex] 2^{|m|}\sqrt{\psi(2^t)}\le 1}} \ \bigcup_{p_1=-1}^q \ \bigcup_{p_2=-1}^q \cC\cap S_{\bm\theta}(q,m,p_1,p_2) \,.
\end{equation}
It is easily verified that
\begin{equation}\label{eS:028}
\diam(\cC\cap S_{\bm\theta}(q,p_1,p_2,m)) \ \ll \  2^{-|m|}\
\frac{\sqrt{\psi(2^t)}}{2^t}
\end{equation}
and that $(p_1,q)\in A_{\bm\theta}(Q,\delta)$ with
$$
\delta  \ \asymp   \  2^{|m|}\, \sqrt{\psi(2^t)} \qquad\text{and}\qquad Q=2^t \,.
$$
By \eqref{eS:027}, the collection of all such  sets $\cC\cap S_{\bm\theta}(q,m,p_1,p_2)$ is  a fine cover of $\cSM_2(\psi;\bm\theta)\cap\cC$.
By Theorem~\ref{BVV} with $\ve:=s/4$, it follows that
$$
N_{\bm\theta}(Q,\delta) \ \ll  \ 2^{|m|}\, \sqrt{\psi(2^t)}\, 2^{2t} + 2^{(1+s/4)t} \,
$$
and so the  $s$-dimensional volume of the above  fine  cover  is
\begin{align}
 & \ll \ \ \sum_{t=1}^\infty\ \sum_{\substack{m\in\Z\\[0ex] 2^{|m|}\sqrt{\psi(2^t)}\le 1}} \hspace*{-2ex}\left(2^{-|m|}\ \frac{\sqrt{\psi(2^t)}}{2^t}\right)^{\!\!\!s} \left(2^{|m|}\ \frac{\sqrt{\psi(2^t)}}{2^t}\ 2^{3t} + 2^{(1+s/4)t}\right)\nonumber \\[2ex]
\label{eq}& \ll  \ \  \sum_{t=1}^\infty  \left(\frac{\sqrt{\psi(2^t)}}{2^t}\right)^{s+1}  2^{3t} \ \sum_{\substack{m\in\Z\\[0.5ex] 2^{|m|}\sqrt{\psi(2^t)}\le 1}} 2^{(1-s)|m|}\  \quad +\nonumber \\[1ex]
&
\hspace*{20ex}+  \quad \sum_{t=1}^\infty\ \sum_{m\in\Z} \left(2^{-|m|}\ \frac{\sqrt{\psi(2^t)}}{2^t}\right)^{\!\!\!s} 2^{(1+s/4)t}\nonumber \\[2ex]
& \stackrel{\eqref{s1}}{\ll}  \ \ \sum_{t=1}^\infty  \left(\frac{\sqrt{\psi(2^t)}}{2^t}\right)^{s+1}  2^{3t} \ \big(\sqrt{\psi(2^t)}\,\big)^{s-1} \ \quad +\nonumber \\[1ex]
&
\hspace*{20ex}+ \quad \sum_{t=1}^\infty\ \left(\frac{\sqrt{\psi(2^t)}}{2^t}\right)^{\!\!\!s} 2^{(1+s/4)t}\nonumber \\[2ex]
& \stackrel{\eqref{x10}}{\ll} \  \ \sum_{t=1}^\infty  2^{(2-s)t}\psi(2^t)^{s}
+\sum_{t=1}^\infty\ 2^{-ts/4} \  \  \stackrel{\eqref{x11}}{<} \  \ \infty\,.
\nonumber
\end{align}
By Lemma~\ref{l3}, $\cH^{s}(\cC\cap\cSM_2(\psi;\bm\theta)) =0$ and the proof of the convergence part of Theorem~\ref{t2} is complete.

\subsubsection{Proof: the divergence case}

We are given that $\sum_{q=1}^\infty  q^{1-s} \psi(q)^{s}=\infty$. Then, by the inhomogeneous version of Jarn\'ik's theorem \cite{Bugeaud} (see also the remark in \cite[\S12.1]{BDV06}), we have that $\cH^{s}(\cS_1(\psi;\theta_1)\cap I)=\infty$.  The  same observation that led  to \eqref{vb3},   gives rise to  the following obvious inclusion
$$
X:=\big\{(x,f(x)): x\in \cS_1(\psi;\theta_1)\cap I\big\}  \ \subset  \ \cC\cap \cSM_2(\psi;\bm\theta).
$$
Since $f\in C^{(1)}$,  we have that $f$ is locally bi-Lipshitz and thus the map $x\mapsto(x,f(x))$ preserves $s$-dimensional Hausdorff measure. Therefore,
$$
\cH^s\big(\cSM_2(\psi;\bm\theta) \cap \cC \big)\ge\cH^s(X)=\cH^{s}(\cS_1(\psi;\theta_1)\cap I)=\infty\,
$$
and so completes the  proof of Theorem~\ref{t2}.

\section{Inhomogeneous badly approximable points}

A real number $x$ is said to be {\em badly approximable} if there
exists a positive constant $c(x)$ such that
\begin{equation*}
 \|qx\| \ > \ c(x) \ q^{-1}  \quad \forall \  q \in \N   \ .
\end{equation*}
Here and throughout $ \| \cdot  \| $ denotes the distance of a real
number to the nearest integer. It is well know that the set $\bad$
of badly approximable numbers is of Lebesgue measure zero. However,
a result of Jarn\'{\i}k  \cite{Jarnik-bad} states that
\begin{equation}
\label{badfulldim} \dim \bad = 1  \ .
\end{equation}
Thus, in terms of dimension the set of badly
approximable numbers is maximal; it has the same dimension as the
real line.

In higher dimensions there are various natural generalizations of
$\bad$. Restricting our attention to the plane $\R^2$, given a pair
of real numbers $i$ and $j$ such that
\begin{equation}\label{neq1}
0\le i,j\le 1   \quad {\rm and \  } \quad  i+j=1  \, ,
\end{equation}
a point $(x_1,x_2) \in \R^2$ is said to be {\em $(i,j)$-badly
approximable} if there exists a positive constant $c(x_1,x_2)$ such that
\begin{equation*}
  \max \{ \; \|qx_1\|^{1/i} \; , \ \|qx_2\|^{1/j} \,  \} \ > \
  c(x_1,x_2) \ q^{-1} \quad \forall \  q \in \N   \ .
\end{equation*}
Denote by   $\bad(i,j)$  the set of $(i,j)$-badly approximable
points in $\R^2$.  If $i=0$, then we use the convention that $ x^{1/i}\::=0 $ and so $\bad(0,1)$ is identified with
$\R \times \bad $. That is, $\bad(0,1)$ consists of points $(x_1,x_2)$
with $x_1 \in \R$ and $x_2 \in \bad$.  The roles of $x_1$ and $x_2$ are
reversed if $j=0$. It easily follows from classical results in the
theory of metric Diophantine approximation   that $\bad(i,j)$ is of
(two-dimensional) Lebesgue measure zero. Building upon the work of Davenport \cite{Davenport-64:MR0166154}, it is shown  in \cite{PV}  that
\begin{equation} \label{pollvel}
\dim \bad(i,j) =  2 \, .
\end{equation}
For alternative proofs and various strengthenings  see \cite{Badziahin-Pollington-Velani-Schmidt,  Fishman-09:MR2528057, Kleinbock-Weiss-05:MR2191212, Kleinbock-Weiss-10:MR2581371,KTV}.  In particular, a consequence of the main result in \cite{Badziahin-Pollington-Velani-Schmidt} is that the intersection of any finite number of $(i,j)$-badly approximable sets is of full dimension. Obviously this implies that the intersection of any two such sets is non-empty and thus establishes a conjecture of Wolfgang Schmidt dating back to the eighties. Most recently,  Jinpeng An \cite{An} has shown that the set $\bad(i,j)$ is winning in the sense of Schmidt games and thus the intersection of any countable number of $(i,j)$-badly approximable sets is of full dimension.

\noindent The goal  in this paper is to obtain the analogue of (\ref{pollvel})  within  the   inhomogeneous  setup.

\medskip

\textbf{Problem 3:} \textit{Find an analogue of \eqref{pollvel} for inhomogeneous approximation.}

\medskip

\noindent For
$\bm\theta=(\theta_1,\theta_2)\in\R^2$, let $\bad(i,j;\bm\theta)$ denote the set of points  $ (x_1,x_2) \in \R^2 $ such that
\begin{equation*}
  \max \{ \; \|qx_1 - \theta_1 \|^{1/i} \; , \ \|qx_2 - \theta_2 \|^{1/j} \,  \} \ > \
  c(x_1,x_2) \ q^{-1} \quad \forall \  q \in \N   \ .
\end{equation*}
Naturally, given $\theta \in \R$ the inhomogeneous generalisation of the one-dimensional set $\bad$ is the set
$$
\bad(\theta)  := \{ x \in \R :  \exists \ c(x)>0  \ { \rm \ so \  that  \ }
 \|qx - \theta \| \ > \ c(x) \ q^{-1}  \quad \forall \  q \in \N  \}  \
$$
and so, for example,  $\bad(0,1; \theta)$ is identified with
$\R \times \bad(\theta) $.
It is straight forward to deduce that $\bad(i,j;\bm\theta)$ is of measure zero from the inhomogeneous version of Khintchine's theorems with varying  approximating functions in each co-ordinate.  We will prove the  following full dimension statement which represents  the inhomogeneous analogue of (\ref{pollvel}).

\begin{theorem}\label{t3} Let $i,j$ satisfy (\ref{neq1}) and $\bm\theta=(\theta_1,\theta_2)\in\R^2$.  Then  $$
\dim \bad(i,j;\bm\theta)  = 2 \, .$$
\end{theorem}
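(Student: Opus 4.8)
The plan is to construct, for every $\varepsilon>0$, a closed subset of $\bad(i,j;\bm\theta)\cap\I^2$ of Hausdorff dimension at least $2-\varepsilon$; since $\bad(i,j;\bm\theta)\subseteq\R^2$ this forces $\dim\bad(i,j;\bm\theta)=2$. The degenerate cases are immediate: if $i=0$ (the case $j=0$ being symmetric) then, with the stated convention, the defining inequality constrains only the second coordinate and $\bad(0,1;\bm\theta)=\R\times\bad(\theta_2)$, so that $\dim\bad(0,1;\bm\theta)=1+\dim\bad(\theta_2)=2$ by the one-dimensional fact that $\dim\bad(\theta)=1$ for every $\theta\in\R$. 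From now on assume $0<i,j<1$, and, by periodicity, $\bm\theta\in\I^2$.

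For the main range $0<i,j<1$ I would run a Cantor-type construction inside $\I^2$, in the spirit of the homogeneous arguments of \cite{PV} and \cite{Badziahin-Pollington-Velani-Schmidt}, but with every rational $p/q$ replaced throughout by the corresponding \emph{shifted} rational $(p+\theta_\bullet)/q$. Fix a large integer $R$, to be sent to infinity at the end. One builds a nested sequence $\mathcal{K}_0\supseteq\mathcal{K}_1\supseteq\cdots$ of finite families of closed rectangles adapted to the $(i,j)$-geometry, the members of $\mathcal{K}_n$ having sidelengths $\asymp R^{-(1+i)n}$ and $\asymp R^{-(1+j)n}$ --- these being the scales at which the denominators $q\asymp R^{n}$ begin to threaten the inequality defining $\bad(i,j;\bm\theta)$. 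To pass from $\mathcal{K}_{n-1}$ to $\mathcal{K}_n$ one subdivides each rectangle of $\mathcal{K}_{n-1}$ into the natural grid of sub-rectangles of the next size and deletes every sub-rectangle containing a point $\mathbf{x}$ with $\|qx_1-\theta_1\|<c^{i}q^{-i}$ and $\|qx_2-\theta_2\|<c^{j}q^{-j}$ for some integer $q$ in the block $R^{n-1}\le q<R^{n}$; the survivors make up $\mathcal{K}_n$, and one sets $\mathbf{K}:=\bigcap_{n\ge0}\bigcup_{B\in\mathcal{K}_n}B$. By construction every $\mathbf{x}\in\mathbf{K}$ satisfies $\max\{\|qx_1-\theta_1\|^{1/i},\|qx_2-\theta_2\|^{1/j}\}>c\,q^{-1}$ for all $q\ge1$ with a fixed $c=c(R)>0$ (the finitely many small $q$ being absorbed into a suitable choice of $\mathcal{K}_0$), so $\mathbf{K}\subseteq\bad(i,j;\bm\theta)$. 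A mass-distribution estimate for such nested families, as in \cite{PV}, then shows that if at \emph{every} stage only a proportion $o_R(1)$ of the children is deleted, with error depending on $i,j,R$ but not on the stage, then $\dim\mathbf{K}\to2$ as $R\to\infty$; taking $R$ large enough gives $\dim\mathbf{K}\ge2-\varepsilon$.

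The argument thus reduces to one counting statement: at every stage of the construction only a negligible proportion of the children of each rectangle is deleted, uniformly in the stage. This is an inhomogeneous estimate for the distribution of the shifted lattice points $\big((p_1+\theta_1)/q,(p_2+\theta_2)/q\big)$ with $q$ restricted to a single block, belonging to the same circle of ideas as Theorem~\ref{BVV} but now for points rather than curves, so that the underlying input is the elementary bound $\#\{(p,q):Q<q\le 2Q,\ (p+\theta)/q\in J\}\ll |J|Q^{2}+Q$ together with its coordinatewise refinement. I expect this, rather than the bookkeeping of the Cantor construction, to be the real obstacle, and it is here that the inhomogeneous shift makes itself felt: in the homogeneous case the decisive structural fact is that distinct rationals $p/q$, $p'/q'$ with denominators in a common block are automatically $\asymp 1/(qq')$-separated, and this separation is what keeps the forbidden rectangles from accumulating and forces only boundedly many children to be removed at each stage; after a shift, $(p_1+\theta_1)/q$ and $(p_1'+\theta_1)/q'$ with $q\ne q'$ may lie arbitrarily close --- their gap being controlled only by how well $\theta_1(q-q')$ is approximated by an integer --- so a priori the forbidden set can cluster. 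The way to deal with this is to carry out the deletion strictly block-by-block in $q$ and to re-establish, in the presence of the shift, the alignment and separation properties of the forbidden rectangles across consecutive scales that underpin the homogeneous count of \cite{Badziahin-Pollington-Velani-Schmidt}, the shift entering only through harmless terms of the shape $\theta_\bullet q$ exactly as in the passage from \cite{VV} to \cite{BVV11} recalled earlier. Granting this, the construction produces $\mathbf{K}\subseteq\bad(i,j;\bm\theta)$ with $\dim\mathbf{K}\ge 2-\varepsilon$ and Theorem~\ref{t3} follows; the same shift-robust deletion scheme is moreover what one would feed into the more elaborate tree constructions of \cite{Badziahin-Pollington-Velani-Schmidt} when attacking the inhomogeneous form of Schmidt's conjecture.
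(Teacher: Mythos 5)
Your skeleton (a Cantor-type construction adapted to the $(i,j)$-geometry, with $\dim\to 2$ as the parameter $R\to\infty$, plus the easy degenerate cases) matches the paper, but the proof has a genuine gap exactly at the point you flag and then set aside with ``Granting this''. The whole difficulty of the inhomogeneous statement is the per-rectangle counting step: one must show that in each rectangle of stage $n$ only boundedly many children are killed by shifted rationals $((p_1+\theta_1)/q,(p_2+\theta_2)/q)$ with $q$ in the current block. Your proposed remedy --- to ``re-establish the alignment and separation properties'' of the forbidden rectangles, with the shift ``entering only through harmless terms'' as in the passage from \cite{VV} to \cite{BVV11} --- cannot work as stated for arbitrary $\bm\theta$: for $q\neq\tilde q$ in the same block one has
$$
\frac{p_1+\theta_1}{q}-\frac{\tilde p_1+\theta_1}{\tilde q}
=\frac{(\tilde qp_1-q\tilde p_1)+\theta_1(\tilde q-q)}{q\tilde q},
$$
so the gap is governed by how close $\theta_1(\tilde q-q)$ is to an integer, and for suitable (e.g.\ Liouville-type, or rational) $\theta_1$ the shifted rationals genuinely cluster; no unconditional separation statement of the homogeneous kind is available, and the curve-counting analogy does not supply the per-rectangle bound you need (an average count in the spirit of Theorem~\ref{BVV} is too weak for a badly-approximable construction, which must control \emph{every} rectangle at \emph{every} stage).

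The paper closes this gap by a different mechanism, which is the actual content of its proof: it does not try to separate shifted rationals at all, but runs the homogeneous construction of \cite{PV} in parallel and imposes the inhomogeneous condition only with a lag $d=[3/i]$ in scale and a smaller constant $c_*$. If two bad inhom-rational pairs with denominators $q,\tilde q$ in the block $[R^{n-d},R^{n+1-d})$ both interfered with a rectangle $F_n$ of the homogeneous collection, then subtracting the two systems of inequalities eliminates $\bm\theta$ and shows that any $(x_1,x_2)\in F_n$ satisfies $\|q_*x_1\|\le c^iq_*^{-i}$ and $\|q_*x_2\|\le c^jq_*^{-j}$ with $q_*=|q-\tilde q|<R^{n}$, contradicting the homogeneous condition (H) already guaranteed on $F_n$. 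Hence at most one bad shifted rational per block meets each rectangle, at most four children are lost, and the dimension computation of \cite{PV} goes through. Without this subtraction idea (or some substitute, e.g.\ a winning-set argument), your construction does not yield the required uniform bound on deletions, so the proposal as written does not prove Theorem~\ref{t3}.
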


The basic philosophy behind the proof is simple and is likely to be applicable to other situations where  the goal is to generalize a known  homogenous badly approximable statement to the inhomogeneous setting -- see Remark 3.5 below.  The key is to  exploit  the known homogeneous `intervals construction' proof and use the power of subtraction; namely
$$
\mbox{(homogeneous construction)} \quad   +  \quad   (\bm\theta - \bm\theta = \bm0)    \quad  \Longrightarrow    \quad \mbox{(inhomogeneous statement).}
$$

\medskip

\noindent Before moving onto the proof of Theorem \ref{t3}, several remarks are in order.

\medskip

\noindent {\em Remark 3.1.}  For  $i$ and $j$ fixed, the proof can be easily modified to deduce that the intersection of any finite number of $\bad(i,j;\bm\theta)$ sets is of full dimension.  In fact, by making use of standard trickery (such as the argument that proves that the countable intersection of winning sets is winning)  one can actually deduce that for any countable sequence $\bm\theta_t \in \R^2$
$$
\dim  \big(\cap_{t=1}^{\infty} \bad(i,j;\bm\theta_t)  \big)  = 2 \, .
$$

\noindent {\em Remark 3.2.}  In another direction, the proof can be adapted to obtain the following more general form of Theorem \ref{t3} in which the inhomogeneous factor $\bm\theta$ depends on $(x_1,x_2)$. More precisely, let
$\bm\theta=(\theta_1,\theta_2):\R^2\to\R^2$ and let $\bad(i,j;\bm\theta)$ denote the set of points  $ (x_1,x_2)$ such that
\begin{equation*}
  \max \{ \; \|qx_1 - \theta_1(x_1,x_2) \|^{1/i} \; , \ \|qx_2 - \theta_2(x_1,x_2) \|^{1/j} \,  \} \ > \
  c(x_1,x_2) \ q^{-1} \quad \forall \  q \in \N   \ .
\end{equation*}
Then, if $\theta_1=\theta_1(x_1)$ and $\theta_2=\theta_2(x_2)$ are Lipshitz functions of one variable, we have that
$$
\dim \bad(i,j;\bm\theta)  =  2 \, .
$$
 As an example,  this statement implies that  there is a set of $(x_1,x_2) \in \R^2 $ of Hausdorff dimension 2 such that
\begin{equation*}
  \max \{ \; \|qx_1 - x_1^2 \|^{1/i} \; , \ \|qx_2 - x_2^3 \|^{1/j} \,  \} \ > \
  c(x_1,x_2) \ q^{-1} \quad \forall \  q \in \N   \ .
\end{equation*}
It is worth pointing out that in the case $i=j$, the   statement is also true if $\theta_1=\theta_1(x_1,x_2)$ and $\theta_2=\theta_2(x_1,x_2)$ are Lipshitz functions of two variables.

\noindent {\em Remark 3.3.}  There is no difficulty  in  establishing  the  higher dimension analogue of Theorem~\ref{t3}.  For any $\bm\theta = (\theta_1, \ldots, \theta_n)  \in \R^n$ and  $n$--tuple of real numbers $i_1, ...,i_n \geq 0 $ such
that $\sum i_r = 1 $, denote by $\bad(i_1, \ldots,i_n; \bm\theta)$ the set of
points $(x_1, ...,x_n) \in \R^n $ for which  there exists a
positive constant $ c(x_1, ...,x_n)$ such that   $$ \max \{ \;
||qx_1  - \theta_1||^{1/i_1} \; , ..., \ ||qx_n   -   \theta_n||^{1/i_n} \,  \} \ > \
c(x_1, ...,x_n) \ q^{-1} \ \ \ \forall \ \ \ q \in \N . $$ By
modifying the proof of Theorem \ref{t3}, in the obvious way,  it is easy to show that $$
\dim \bad(i_1 \ldots,i_n;\bm\theta)  = n  \, .$$
Moreover, the various proofs of the  homogeneous results obtained in \cite{KTV} regarding $ \bad(i_1 \ldots,i_n) \cap \Omega $,
 where $ \Omega $ is some `nice' fractal set  (essentially, the support set of an absolutely friendly, Ahlfors regular measure)  can be adapted to give the corresponding  inhomogeneous  statements  without any serious difficulty.  We have decided to restrict ourselves to proving simply Theorem \ref{t3} since it already contains the necessary ingredients  to obtain the inhomogeneous statement from the homogeneous proof.

 \medskip

\noindent {\em Remark 3.4.} In the symmetric case $ i_1=\ldots=i_n=1/n$,  our Theorem \ref{t3} and indeed its generalizations mentioned in the previous remark are  covered by  the work of Einsiedler $\&$  Tseng  \cite[Theorem 1.1]{ET}.  They actually deal with badly approximable systems of linear forms and show that the intersection of such sets with the support set $\Omega$  of an absolutely friendly measure is winning in the sense of Schmidt games.  We mention in passing, that  Einsiedler $\&$  Tseng proved their results roughly at the same time as us, but for some mystical reason, it has taken us over four years to present our work.  Indeed the second author had a useful discussion with Einsiedler regarding their preprint at the conference `The Diverse Faces of Arithmetic'   in honour of the late Graham Everest in 2009.

\medskip

\noindent {\em Remark 3.5.}  The basic philosophy behind the proof of Theorem \ref{t3} can be exploited to yield the inhomogeneous  strengthening  of Schmidt's Conjecture.  More precisely, we are able show that any inhomogeneous $\bad(i,j;\bm\theta)$ set is winning  and thus
$$
\dim  \big(\cap_{t=1}^{\infty} \bad(i_t,j_t;\bm\theta_t)  \big)  = 2 \, .
$$
Furthermore, it is possible to show that the intersection of $\bad(i,j;\bm\theta)$ with any non-degenerate planar curve $\cC$ is winning as is the intersection with any straight line satisfying certain natural Diophantine conditions. The former implies that
$$
\dim  \big( \cap_{t=1}^{\infty} \bad(i_t,j_t;\bm\theta_t) \cap  \cC \big)  = 1 \, ,
$$
which strengthens even the homogeneous results obtained in \cite{DavBV,DavVB} that solve an old problem of Davenport. These winning results will be the subject of a forthcoming joint paper with Jinpeng An.

\subsection{Proof of Theorem \ref{t3}}

Throughout, we fix $i,j > 0 $ satisfy (\ref{neq1}) and $\bm\theta=(\theta_1,\theta_2)\in\R^2$. The situation when either $i=0$ or  $j=0$ is easier and will be  omitted.

Since   $ \bad(i,j;\bm\theta)  \subseteq \R^2$, we  obtain for free  the upperbound result:
 $$
\dim \bad(i,j;\bm\theta)  \le 2 \, .$$
Thus, the proof reduces to establishing the complementary lowerbound.   With this in mind, for a fixed constant $c > 0$
let $$
\bad_c(i,j;\bm\theta) := \{ (x_1,x_2) \in \R^2 :  \max \{  \|qx_1 - \theta_1 \|^{1/i} ,  \|qx_2 - \theta_2 \|^{1/j}   \} \, > \,
  c/q \quad \forall \,  q \in \N  \} \, . $$

\noindent Clearly $\bad_c(i,j;\bm\theta) \subset \bad(i,j;\bm\theta)$ and  $$\bad(i,j;\bm\theta) = \bigcup_{c >0} \bad_c(i,j;\bm\theta) \, . $$ Geometrically,
the set $\bad_c(i,j;\bm\theta)$ simply consists of points $(x_1,x_2) \in \R^2$
which avoid all rectangles centred at inhom-rational points  $((\theta_1 -p_1)/q,(\theta_2-p_2)/q)$
of side length $2c^i q^{-(1+i)} \times 2c^j q^{-(1+j)}$. The sides
are taken to be parallel to the coordinate axes. The overall strategy is to construct a  `Cantor--type' subset
$\K (= \K(i,j)) $ of $\bad_c(i,j;\bm\theta)$ with the property that $ \dim \K  \to 2 $ as $ c \to 0$. This together with the fact that
$$ \dim \bad(i,j;\bm\theta) \ \ge \  \dim \bad_c(i,j;\bm\theta) \ \ge \
\dim \K  $$  implies the  required lower bound result.

To obtain the desired Cantor type set  $\K$,  we  adapt  the homogeneous construction of $\hK =\hK^{\!\!\!\bm0} $ given in \cite[\S3.1]{PV} that is at the heart of establishing $(\ref{pollvel})$; that is to say  Theorem~\ref{t3}  with $\bm\theta = \bm0$.

\subsubsection{The homogeneous construction}
Let  $R  \ge 11  $ be  an integer  and  $c > 0 $ be given by
\begin{equation} c \   :=  \ 8^{-1/i} \, R^{-2(1+i)/i}  \, .  \label{c}
\end{equation}
It is established  in \cite[\S3.1]{PV}, by induction on $ n \ge 0$, the existence of a nested collection
 $\cF_n$ of closed rectangles $F_n := I_n  \times J_n $ with the property  that
for all points $(x_1, x_2) \in F_n$  the following (homogeneous)   condition
is  satisfied:
$$ ~ \hspace{2cm}
  \max \{ \; \|qx_1 \|^{1/i} \; , \ \|qx_2 \|^{1/j} \,  \} \ > \
  c  \, q^{-1}
\hspace{1cm} \forall \ \  \ 0 \ < \
q \ <  \  R^n \, . \hspace{12mm} {\rm (H)} $$
The side lengths $I_n$ and $J_n$ of $F_n$ are  given by
\begin{equation}
 I_n \; := \; \mbox{\small{$\frac{1}{4}$}} \, R^{-(1+i)(n+1)}
\hspace{0.7cm} {\rm and } \ \hspace{0.7cm} J_n \; := \; \mbox{\small{$\frac{1}{4}$}} \,
R^{-(1+j)(n+1)}  \qquad (n \ge 0) \, .  \label{size}
\end{equation}

\noindent
Without
loss of generality assume that $0< i \le j < 1$ so that the
rectangles $F_n$ are long and thin unless $i=j$ in which case the
rectangles are obviously squares.

 The crux of the induction is as follows.
We work within the
closed unit square and start by subdividing the square into closed
rectangles $F_0$ of size $I_0 \times J_0$  -- starting from the
bottom left hand corner of the unit square (i.e. the origin).  Denote by $\cF_0$ the collection of
rectangles $F_0$. For $n=0$, condition
(H)  is trivially satisfied for any rectangle $F_0 \subset
\cF_0$, since there are no integers $q$ satisfying $ 0<q<1$. Given
$\cF_n$ satisfying condition (H), we wish to construct a
nested collection $\cF_{n+1}$ for which the condition is
satisfied for $n+1$.  Suppose $F_n$ is a good rectangle; that is,
all points $(x_1,x_2) \in F_n$ satisfy condition (H). In
short, $F_n \in \cF_n$.  Now partition $F_n$ into rectangles $F_{n+1}$ of size $I_{n+1}
\times J_{n+1} $ -- starting from the bottom left hand corner of
$F_n$. From (\ref{size}), it follows that there are $[R^{1+i}]
\times [R^{1+j}]$ rectangles in the partition.  Since they are nested,  anyone of these rectangles will satisfy condition (H) for $n+1$
if for any point $(x_1,x_2)$
in $F_{n+1}$ the inequality
\begin{equation}
\max \{ \; ||qx_1||^{1/i} \; , \ ||qx_2||^{1/j} \,  \} \ > \  c \;
q^{-1}
\label{ineqij}
\end{equation}
is satisfied for \begin{equation}
 R^n \ \le \ q \ < R^{n+1} \ .
\label{rangeij}
\end{equation}
With $q$ in this `denominator' range, suppose there exists a bad
rational pair $(p_1/q,p_2/q)$ so that (\ref{ineqij}) is violated, in
other words $$ |x_1 - p_1/q | \le c^i \, q^{-(1+i)} \hspace{1cm}
{\rm and } \hspace{1cm}
 |x_2 - p_2/q| \le c^j \, q^{-(1+j)} \  $$
 for some point  in $F_n$ and therefore in some $F_{n+1}$. Such    $F_{n+1}$ rectangles are bad  in the sense that  they do not satisfy condition (H) for $n+1$ and those that remain are good.  The upshot of the `Stage 1'  argument in \cite[\S3.1]{PV} is that there are at most
 \begin{equation}
   3 \, \left[ \frac{J_n}{J_{n+1}} \right]  \, \le \, 3 R^{1+j}
 \end{equation}
 bad $F_{n+1}$ rectangles in $F_n$.  Hence, out of the potential  $[R^{1+i}]
\times [R^{1+j}]$ rectangles,   at least
\begin{eqnarray*}
 (R^{1+i} -1)(R^{1+j} -1) \; - \; 3 R^{1+j}  \ > \   R^3(1- 5 R^{-(1+i)})  \,
 \label{card1}
\end{eqnarray*}
are good  $F_{n+1} $ rectangles in $F_n$.  Now choose exactly $ [R^3 \, ( 1
\; - \; 5 \, R^{-(1+i)} )]$ of these good rectangles and denote this collection by ${\cal F}(F_n)$. Finally, define
 $$ \cF_{n+1} \ := \ \bigcup_{F_n \subset \cF_n } {\cal
F}(F_n) \ . $$  Thus, given the collection $\cF_n$ for which
condition (H) is  satisfied for $n$, we have constructed a
nested collection $\cF_{n+1}$ for which  condition (H) is
satisfied for $n+1$. This completes the proof of the induction
step and so the construction of the Cantor-type set $$ \hK \ := \
\bigcap_{n=0}^{\infty} \cF_n \, . $$

\medskip

\subsubsection{Bringing the inhomogeneous approximation into play}

The idea  is to  merge the inhomogeneous approximation constraints into the above homogeneous construction. In short, this involves  creating  a subcollection $\cF_n^{\bm\theta}$ of  $\cF_n$ so that for all points $(x_1, x_2) \in F_n$ with $F_n  \subseteq \cF_n^{\bm\theta}$, both the  (homogeneous)   condition (H)  and the  following (inhomogeneous)  condition
are   satisfied:
$$ ~ \hspace{1cm}
  \max \{ \; \|qx_1 - \theta_1 \|^{1/i} \; , \ \|qx_2 - \theta_2\|^{1/j} \,  \} \ > \
c_*   \, q^{-1}
\hspace{1cm} \forall  \  \ 0 \ < \
q \ <  \  R^{n-d} \, , \hspace{12mm} {\rm (I)} $$
where
\begin{equation}
c_*^i \, := \,  \mbox{\small{$\frac{1}{8}$}}  R^{-(1+i)(d+2)} \qquad   {\rm and} \qquad d:= \left[\frac{3}{i}\right]
\label{c_*}
\end{equation}

\noindent For $n=0$, condition
(I)  is trivially satisfied for any rectangle $F_0 \subset
\cF_0$, since there are no integers $q$ satisfying $ 0<q<1$. Put $\cF_0^{\bm\theta} := \cF_0$.  Now suppose $\cF_n^{\bm\theta}  \subseteq  \cF_n$ has been constructed  and for each $F_n$  in  $\cF_n^{\bm\theta}$ construct the collection $\cF (F_n) $ as  before.  Then by definition, each $  F_{n+1} \in \cF (F_n) $ satisfies condition  (H)  for $n+1$.  The aim is to construct a subcollection  $ \cF^{\bm\theta} (F_n) $ such that for each $  F_{n+1} $ in $ \cF^{\bm\theta} (F_n) $ condition (I) for $n+1$  is also satisfied; in other words, for any point $(x_1,x_2)$
in $F_{n+1}$ the inequality
\begin{equation}
\max \{ \; \|qx_1 - \theta_1\|^{1/i} \; , \ \|qx_2 -\theta_2 \|^{1/j} \,  \} \ > \
c_*   \, q^{-1}
\label{hineqij}
\end{equation}
is satisfied for \begin{equation}
 R^{n-d} \ \le \ q \ < R^{n+1-d} \ .
\label{hrangeij}
\end{equation}
With $q$ satisfying \eqref{hrangeij}, suppose there exists a bad
inhom-rational pair $((\theta_1 +p_1)/q,(\theta_2+p_2)/q)$ so that (\ref{hineqij}) is violated, in
other words
 $$ |x_1 - (\theta_1 + p_1)/q | \le c_*^i \, q^{-(1+i)}   \hspace{1cm}
{\rm and } \hspace{1cm}
 |x_2 - (\theta_2 + p_2)/q| \le c_*^j \, q^{-(1+j)}  \  $$
for some point  $(x_1,x_2)$ in $F_n$.
Then, in view of   \eqref{hrangeij}, it follows that
\begin{equation}
 |x_1 - (\theta_1 + p_1)/q |   \ \le  \    c_*^i \, R^{-(1+i)(n-d)}  \ \le   \  \mbox{\small{$\frac{1}{2}$}} \, |I_{n+1}|
 \label{sv1}
\end{equation}
if
\begin{equation}
  c_*^i  \ \le \ \mbox{\small{$\frac{1}{8}$}}  R^{-(1+i)(d+2)}  \, .
 \label{ssv1}
\end{equation}
\noindent Similarly,
\begin{equation}
 |x_2 - (\theta_2 + p_2)/q| \  \le   \  \mbox{\small{$\frac{1}{2}$}} \, |J_{n+1}|
 \label{sv2}
\end{equation}
if
\begin{equation}
  c_*^j  \ \le \ \mbox{\small{$\frac{1}{8}$}}  R^{-(1+j)(d+2)}  \, .
 \label{ssv2}
\end{equation}
Observe that \eqref{ssv1} implies \eqref{ssv2}.  In view of \eqref{c_*}, we have equality in  \eqref{ssv1}, thus \eqref{sv1} and \eqref{sv2} are satisfied  and it follows that  any  bad inhom-rational pair gives rise to at most 4 bad rectangles $F_{n+1} $ in $  \cF (F_n) $; i.e. rectangles for which (I) is not satisfied for $n+1$.  Now suppose there exist two bad inhom-rational pairs, say   $((\theta_1 +p_1)/q,(\theta_2+p_2)/q)$  and $((\theta_1 + \tilde{p}_1)/\tilde{q},(\theta_2-\tilde{p}_2)/\tilde{q})$.  Then, for any   $(x_1,x_2)  \in F_n$ we have that
\begin{eqnarray*}
 |x_1 - (\theta_1 + p_1)/q |    & \stackrel{\eqref{sv1}}{\le}  &     \mbox{\small{$\frac{1}{2}$}} \, |I_{n+1}|  +   |I_n|  \  <  \ 2  |I_n| \nonumber  \\[2ex]
~ \Longrightarrow & &  \!\!\!\!  | q x_1 - \theta_1 - p_1 |   \  <    \   q \, 2  |I_n|  \ \stackrel{\eqref{hrangeij}}{\le}   \ 2   R^{n+1-d} |I_n| \,  ~
\end{eqnarray*}
and
\begin{eqnarray*}
 |x_2 - (\theta_2 + p_2)/q |    & \stackrel{\eqref{sv2}}{\le}  &    \ \mbox{\small{$\frac{1}{2}$}} \, |J_{n+1}|  +   |J_n|   \ <   \  2  |J_n| \nonumber  \\[2ex]
~ \Longrightarrow & &  \!\!\!\!  | q x_2 - \theta_2 - p_2 |   \ <    \   q \, 2  |J_n|  \ \stackrel{\eqref{hrangeij}}{\le}   \ 2   R^{n+1-d} |J_n| \, .
\end{eqnarray*}
Similarly, we obtain that
\begin{equation*}
 | \tilde{q} x_1 - \theta_1 - \tilde{p}_1 |    <       2   R^{n+1-d} |I_n|   \qquad {\rm and } \qquad  | \tilde{q} x_2 - \theta_2 - \tilde{p}_2) |   <       2   R^{n+1-d} |J_n| \, .
\end{equation*}
Let $q_* := | q - \tilde{q} |  $ and observe that
\begin{equation}
 0<  q_* < R^{n+1-d} < R^n
 \label{Rm}
\end{equation}
It now follows  that
\begin{eqnarray}
|q_*  x_1 - (p_1 + \tilde{p}_1) |  & = &  | (q x_1 - \theta_1 - p_1   ) - ( \tilde{q} x_1 - \theta_1 - \tilde{p}_1  ) |  \nonumber \\[2ex]   & \le & | q x_1 - \theta_1 - p_1   | +   |\tilde{q} x_1 - \theta_1 - \tilde{p}_1 |    \nonumber \\[2ex]
&  \le &  4 R^{m+1} |I_n| \  \le  \   \mbox{\small{$\frac{1}{2}$}}  \,  R^{-d(1+i)} q_*^{-i} \,   \stackrel{\eqref{c_*}}{\le}  \,  R^{-3(1+i)} q_*^{-i} \,    \nonumber \\[2ex]   & \stackrel{\eqref{c}}{\le}  & c^i  q_*^{-i}  \, .
\label{tut1}
\end{eqnarray}
Similarly,
\begin{eqnarray}
|q_*  x_2 - (p_2 + \tilde{p}_2) |  & = &  | (q x_2 - \theta_2 - p_2   ) - ( \tilde{q} x_2 - \theta_2 - \tilde{p}_2  ) |  \nonumber \\[2ex]   & \le & | q x_2 - \theta_2 - p_2   | +   |\tilde{q} x_2 - \theta_1 - \tilde{p}_2 |    \nonumber \\[2ex]
&  \le &  4 R^{m+1} |J_n| \  \le  \   \mbox{\small{$\frac{1}{2}$}}  \,  R^{-d(1+j)} q_*^{-j} \,   \stackrel{\eqref{c_*}}{\le}  \,   \mbox{\small{$\frac{1}{2}$}}  \, R^{-\frac{(3-i)}{i}(1+j)} q_*^{-j} \,    \nonumber \\[2ex]   & \stackrel{\eqref{c}}{\le}  & c^j  q_*^{-j}  \, .
\label{tut2}
\end{eqnarray}

The upshot of  inequalities \eqref{Rm}, \eqref{tut1}  and \eqref{tut2}  is that the homogeneous condition (H) is not satisfied for points in $F_n$.  This contradicts the fact that
$F_n \in \cF_n^{\bm\theta} \subseteq  \cF_n  $.
In turn, this implies that there exists at most one bad inhom-rational pair  that gives rise to at most 4 bad  $F_{n+1}$ rectangles amongst those in
$  \cF(F_n) $.  In other words, at least
\begin{eqnarray*}
\# \cF(F_n) - 4    \ = \ [R^3 \, ( 1
\; - \; 5 \, R^{-(1+i)} )] -4   \ > \   R^3(1- 6 R^{-(1+i)})  \,
 \label{card2}
\end{eqnarray*}
 of the  $F_{n+1} $ rectangles  satisfy both conditions (H) and (I) for $n+1$.  Now choose exactly $ [R^3 \, ( 1
\; - \; 6 \, R^{-(1+i)} )]$ of these good rectangles and denote this collection by $ \cF^{\bm\theta} (F_n) $. Finally, define
 $$ \cF_{n+1}^{\bm\theta} \ := \ \bigcup_{F_n \subset \cF_n^{\bm\theta} } {\cal
F^{\bm\theta}}(F_n) \ \qquad {\rm and}  \qquad   \K \ := \
\bigcap_{n=0}^{\infty} \cF_n^{\bm\theta}     \, . $$

\medskip

\subsubsection{The finale}
It remains to show that
$$
 \dim \K  \ \to \  2  \qquad {\rm as } \qquad c \to 0 \, .
 $$
 This involves essentially following line by line the arguments set out in  \cite[\S3.2 and \S3.3]{PV}. The details are left to the reader.  \\[-1ex] \hspace*{\fill}$\boxtimes$

\vspace{7ex}

\noindent{\bf Acknowledgements.}  SV would like to thank Dani for his support during the early years of his mathematically life  -- basically when it matters most!  Naturally, an enormous thanks to the delightful and magnificent  Bridget, Ayesha and Iona.  The younger two are curious and challenging for all the right reasons -- long may you remain that way and good luck with high school!  This brings me onto their wonderful teachers during  the last two years at primary school who have installed them with great belief and have been superb role models for both children and adults. For this and much much more,   I would like to take this opportunity to thank  Mr Youdan, Mr Middleton and Mrs Davison.    I hope you continue to inspire!

\vspace{5mm}

\noindent Victor Beresnevich, \ Sanju Velani:\\ Department of Mathematics, University of
York,\\
Heslington, York, YO10 5DD, England.


\noindent{\em e-mails}:\\  {\tt victor.beresnevich@york.ac.uk}\\
{\tt sanju.velani@york.ac.uk}


\begin{thebibliography}{99}

\bibitem{An}
J.~An.
\newblock Two dimensional badly approximable vectors and Schmidt's game.
\newblock {Pre-print:  	arXiv:1204.3610}.


\bibitem{BL}
D.~Badziahin, J.~Levesley.
\newblock A note on simultaneous and multiplicative Diophantine approximation on planar curves.
\newblock {\em Glasgow Mathematical Journal}, 49 (2):367--375, 2007.


\bibitem{Badziahin-Pollington-Velani-Schmidt}
D.~Badziahin, A.~Pollington, S.~Velani.
\newblock On a problem in simultaneous Diophantine approximation: Schmidt's
  conjecture.
\newblock {\em Ann. of Math. (2)}, 174(3):1837--1883, 2011.

\bibitem{DavBV}
D.~Badziahin, S.~Velani.
\newblock  Badly approximable points on planar curves and a problem of Davenport.
\newblock {Pre-print: arXiv:1301.4243}.

\bibitem{DavVB}
V.~Beresnevich.
\newblock  Badly approximable points on manifolds.
\newblock {Pre-print:  arXiv:1304.0571}.

\bibitem{SDA}
V.~Beresnevich.
\newblock Rational points near manifolds and metric Diophantine approximation.
\newblock {\em Ann. of Math. (2)}, 175(1):187--235,  2012.

\bibitem{BBDVRoth}
V. Beresnevich, V. Bernik, M. Dodson and S. Velani.
\newblock Classical metric diophantine approximation revisited.
\newblock {\em Chen, W. W. L. (ed.) et al., Analytic number theory. Essays in
honour of Klaus Roth on the occasion of his 80th birthday. Cambridge: Cambridge University
Press.} pp.~38--61, 2009.


\bibitem{BDV06}
V.~Beresnevich, D.~Dickinson and S.~L. Velani:
\newblock Measure Theoretic Laws for limsup Sets.
\newblock {\em Mem. \ Amer. \ Math. \ Soc.} 179(846):1--91, 2006.

\bibitem{BHV}
V.~Beresnevich, A.~Haynes, and S.~Velani.
\newblock Multiplicative zero-one laws and metric number theory.
\newblock {\em Acta Arith.} 160(2):101--114, 2013.


\bibitem{BV06}
V.~Beresnevich and S.~L. Velani.
\newblock A Mass Transference Principle and the Duffin-Schaeffer conjecture for Hausdorff measures.
\newblock {\em Ann. of Math (2)}. 164:971--992, 2006.

\bibitem{BV06Sliching}
V.~Beresnevich and S.~L. Velani.
\newblock Schmidt's theorem, Hausdorff measures and slicing.
\newblock {\em Int. Math. Res. Not.}, 2006:1--24, article ID 48794, 2006.

\bibitem{BVV11}
V.~Beresnevich, R.C.~Vaughan and S.~L. Velani.
\newblock Inhomogeneous Diophantine approximation on planar curves.
\newblock {\em Math. Ann.} 349:929--942, 2011.

\bibitem{BD99}
V.~I. Bernik and M.~M. Dodson.
\newblock Metric {Diophantine} approximation on manifolds.
\newblock {\em Cambridge Tracts in Mathematics, vol.~137},  Cambridge University
  Press, Cambridge, 1999.


\bibitem{Besic34}
A.S. Besicovitch.
\newblock Sets of fractional dimensions IV: on rational approximation to real numbers.
\newblock {\em J. London Math. Soc.}, 9 (1934), 126–131.

\bibitem{Bugeaud}
Y.~Bugeaud.
\newblock An inhomogeneous Jarn\'\i k theorem
\newblock {\em Journal d'Analyse Mathématique},  92  (2004), 327-349



\bibitem{Davenport-64:MR0166154}
H.~Davenport.
\newblock A note on {D}iophantine approximation. {II}.
\newblock {\em Mathematika}, 11:50--58, 1964.


\bibitem{ET}
 M. Einsiedler, J. Tseng.
\newblock Badly approximable systems of affine forms, fractals, and Schmidt games.
\newblock {\em J. Reine Angew. Math},  660 , 83--97, 2011.

\bibitem{Fishman-09:MR2528057}
L.~Fishman.
\newblock Schmidt's game, badly approximable matrices and fractals.
\newblock {\em J. Number Theory}, 129(9):2133--2153, 2009.

\bibitem{gallagher1962}
P.~X. Gallagher.
\newblock Metric simultaneous diophantine approximation.
\newblock {\em J. London Math. Soc.}, 37:387--390, 1962.

\bibitem{gall}
P.~X. Gallagher.
\newblock Metric simultaneous diophantine approximation. II.
\newblock {\em Mathematika}, 12:123--127, 1965.




\bibitem{Jarpa}
I. Jarn\'ik : Sur les approximations diophantiennes des nombres
$p$--adiques. {\em revista Ci Lima.} \textbf{47}, 489--505.


\bibitem{Jarnik-bad}
V.~Jarn{\'\i}k.
\newblock {A contribution to the metric theory of Diophantine approximations.}
\newblock {\em Prace math.-fiz.}, 36:91--106, 1929.
\newblock (In Polish).


\bibitem{Khintchine-1924}
A.~Khintchine, Einige {S}\"atze \"uber {K}ettenbr\"uche, mit
  {Anwendungen} auf die {Theorie} der {Diophantischen} {Approximationen},
  {\em Math. Ann.}, 92 (1924), pp.~115--125.

\bibitem{Kleinbock-Weiss-05:MR2191212}
D.~Kleinbock,  B.~Weiss.
\newblock Badly approximable vectors on fractals.
\newblock {\em Israel J. Math.}, 149:137--170, 2005.

\bibitem{Kleinbock-Weiss-10:MR2581371}
D.~Kleinbock,  B.~Weiss.
\newblock Modified {S}chmidt games and {D}iophantine approximation with
  weights.
\newblock {\em Adv. Math.}, 223(4):1276--1298, 2010.


\bibitem{KTV}
S.~Kristensen, R.~Thorn,  S.~Velani.
\newblock Diophantine approximation and badly approximable sets.
\newblock {\em Adv. Math.}, 203(1):132--169, 2006.


\bibitem{PV}
A.~Pollington,  S.~Velani.
\newblock On simultaneously badly approximable numbers.
\newblock {\em J. London Math. Soc. (2)}, 66(1):29--40, 2002.

\bibitem{VV}
R.~Vaughan,  S.~Velani.
\newblock Diophantine approximation on planar curves: the convergence theory.
\newblock {\em Invent. Math.}, 166(1):103--124, 2006.

\end{thebibliography}
\end{document}